\title{
Sharp Entropy Bounds for Plane Curves and Dynamics of the Curve Shortening Flow
}
\author{
Julius Baldauf and Ao Sun
}
\date{
\small \today
}
\theoremstyle{plain}
\newtheoremstyle{named}%
    {}{}{\itshape}{}{\bfseries}{.}{.5em}{\thmnote{#3}}
\theoremstyle{named}
\newtheorem*{namedtheorem}{Theorem}
\newcommand\bref[3][blue]{%
    \begingroup%
    \hypersetup{linkcolor=#1}%
    \hyperlink{#2}{#3}%
    \endgroup}
\numberwithin{equation}{section}
\theoremstyle{plain} 
\newtheorem{lemma}[equation]{Lemma}
\newtheorem{prop}[equation]{Proposition}
\newtheorem{thm}[equation]{Theorem}
\newtheorem*{thmA*}{Theorem A}
\newtheorem*{thmB*}{Theorem B}
\newtheorem{cor}[equation]{Corollary}
\theoremstyle{definition} 
\newtheorem{remark}[equation]{Remark}
\newtheorem{defn}[equation]{Definition}
\let\c@defn\c@thm
\let\c@remark\c@thm
\let\c@prop\c@thm
\let\c@lemma\c@thm
\let\c@cor\c@thm
\let\c@example\c@thm
\newcommand{\R}{\mathbb{R}}
\newcommand{\Z}{\mathbb{Z}}
\newcommand{\N}{\mathbb{N}}
\newcommand{\inv}[1]{#1^{-1}}
\renewcommand{\tilde}{\widetilde}
\newcommand{\be}{\begin{equation}}
\newcommand{\ee}{\end{equation}}
\newcommand{\ind}{\mathrm{ind}}
\begin{document}
\maketitle
\begin{abstract}
We prove that a closed immersed plane curve with total curvature $2\pi m$ has entropy at least $m$ times the entropy of the embedded circle, as long as it generates a type I singularity under the curve shortening flow (CSF). We construct closed immersed plane curves of total curvature $2\pi m$ whose entropy is less than $m$ times the entropy of the embedded circle. As an application, we extend Colding-Minicozzi's notion of a generic mean curvature flow to closed immersed plane curves by constructing a piecewise CSF whose only singularities are embedded circles and type II singularities. 
\end{abstract}


\section{Introduction}

\paragraph{}

Huisken \cite{Il} conjectured that for mean curvature flow (MCF) from generic initial embedded hypersurfaces, all singularities are spheres or cylinders. In a recent fundamental paper \cite{CoMi}, Colding and Minicozzi made an important step towards establishing Huisken's genericity conjecture. In that paper, they define the {\it entropy} of an immersed hypersurface $\Sigma\subset \R^{n+1}$ to be 
\be \label{eq:entropy definition}
\lambda(\Sigma)=\sup_{x_0,t_0}\;\;\;(4\pi t_0)^{-n/2}\int_{\Sigma}e^{-\frac{|x-x_0|^2}{4t_0}}d\mu, 
\ee where the supremum is taken over all translations $x_0\in \R^{n+1}$ and rescalings $t_0>0$ of $\Sigma$. Entropy is nonincreasing along MCF and is constant along the self-shrinking flows, which are known as {\it shrinkers}. Entropy is used in the following important argument: if $\Sigma$ is a shrinker arising as the limit of a blowup sequence of a singular point of the MCF starting at the initial hypersurface $\Sigma_0$ then $\lambda(\Sigma)\leq \lambda(\Sigma_0)$. Entropy thus provides a vital tool for ruling out certain singularities for a MCF.

The first main theorem of this paper gives entropy lower bounds for plane curves in terms of a topological quantity, the {\it turning number}, which is the total curvature divided by $2\pi$. We denote by $\Gamma_m\subset \R^2$ the {\it $m$-covered circle} with radius $\sqrt{2}$, i.e.\ $\Gamma_m$ is an immersion $S^1\to \R^2$ whose turning number is $m$ and whose image is the circle of radius $\sqrt{2}$. It is easy to see that $\Gamma_m$ is a shrinker for all $m\geq 1$.

\begin{thmA*}
Let $\Gamma\subset \R^2$ be a closed immersed curve with turning number $m$.
\begin{enumerate}[label=\textbf{A.\arabic*},ref=A.\arabic*]
\item\label{thm:main theorem type I singularity then entropy has lower bound}
If the CSF starting at $\Gamma$ generates a type I singularity, then 
$\lambda(\Gamma)\geq\lambda(\Gamma_m)$.
\item\label{thm:main theorem existence of perturbation reduce the entropy}
There exist closed immersed curves $\Gamma'\subset \R^2$ with turning number $m$, but
$\lambda(\Gamma')<\lambda(\Gamma_m).$ 
Any such curve generates only type II singularities under the CSF.
\end{enumerate}
\end{thmA*}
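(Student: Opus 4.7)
The plan is to derive A.1 from a blow-up analysis at the type I singular point and then to obtain A.2 by combining an explicit construction with the contrapositive of A.1.

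For A.1, I will first use the type I curvature bound $|\kappa|\le C(T-t)^{-1/2}$ to perform a Huisken-style parabolic rescaling at the singular point; a subsequence of the rescaled flows then converges smoothly to a compact self-shrinker $\Sigma_\infty\subset\R^2$. The key topological step is to argue that $\Sigma_\infty$ has turning number exactly $m$: since turning number is preserved along the smooth CSF, and type I for a closed plane curve forces the entire curve to collapse to the singular point in the rescaled picture (using length bounds together with Abresch-Langer's fact that all compact planar shrinkers sit in a bounded set of $\R^2$), the blow-up limit inherits the full topology of $\Gamma$. Entropy monotonicity under CSF, combined with scaling invariance of $\lambda$, then gives $\lambda(\Gamma)\ge \lambda(\Sigma_\infty)$. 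Finally, by the Abresch-Langer classification, $\Sigma_\infty$ is either an $m$-covered circle (necessarily $\Gamma_m$, so $\lambda(\Sigma_\infty)=\lambda(\Gamma_m)$) or an Abresch-Langer curve of turning number $m$; in the latter case, a direct computation using the shrinker identity $\kappa=-\langle x,\nu\rangle/2$, the resulting equality $\int_\Sigma \kappa^2\,ds=L(\Sigma)/2$, and Cauchy-Schwarz $(2\pi m)^2\le L(\Sigma)\int_\Sigma \kappa^2\,ds$ should yield $\lambda(\Sigma_\infty)>\lambda(\Gamma_m)$.

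For A.2, the existence claim will be proved by explicit construction: take $m$ roughly circular loops at widely separated scales $r_1\ll r_2\ll\cdots\ll r_m$, arranged concentrically and linked by thin overlapping arcs so that the resulting closed immersion $\Gamma'\subset\R^2$ has turning number $m$ (a limacon-type configuration when $m=2$, iterated for larger $m$). Because the loops sit at vastly different scales, for any admissible $(x_0,t_0)$ the Gaussian density $(4\pi t_0)^{-1/2}\exp(-|x-x_0|^2/4t_0)$ is essentially concentrated on at most one loop and its adjacent connecting arc; a routine estimate then yields $\lambda(\Gamma')\le \lambda(\Gamma_1)+\epsilon < m\,\lambda(\Gamma_1)=\lambda(\Gamma_m)$ provided the scale separation is large enough. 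The assertion that any such $\Gamma'$ develops only type II singularities under CSF follows immediately by the contrapositive of A.1.

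The principal obstacle is the topological step in A.1 -- identifying the turning number of $\Sigma_\infty$ with $m$. This requires ruling out scenarios in which the type I singularity forms only on a proper subarc while the remainder of the curve either escapes to infinity under rescaling or continues smoothly past the first singular time; handling this will likely require Angenent's detailed analysis of CSF for closed immersed plane curves together with the uniform boundedness of compact planar shrinkers.
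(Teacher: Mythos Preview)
Your plan for A.1 contains a genuine gap at the comparison $\lambda(\Sigma_\infty)\ge\lambda(\Gamma_m)$ when $\Sigma_\infty$ is a non-circular Abresch--Langer curve. The Cauchy--Schwarz argument you outline is correct as far as it goes: from $\kappa=\tfrac12\langle x,\mathbf n\rangle$ one obtains $\int_{\Sigma_\infty}\kappa^2\,ds=L(\Sigma_\infty)/2$, and then $(2\pi m)^2\le L(\Sigma_\infty)\int\kappa^2\,ds$ yields $L(\Sigma_\infty)\ge 2\sqrt 2\,\pi m=L(\Gamma_m)$, with equality only for the round circle. But this is an inequality of \emph{Euclidean} lengths, whereas the entropy of a shrinker is its \emph{Gaussian-weighted} length $\lambda(\Sigma_\infty)=(4\pi)^{-1/2}\int_{\Sigma_\infty}e^{-|x|^2/4}\,ds$. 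Since $|x|$ is not constant on a non-circular Abresch--Langer curve, there is no passage from the length inequality to the entropy inequality. The paper closes this gap by a completely different mechanism: it invokes Au's result that the rescaled CSF starting from the perturbation $\Gamma_{m,n}+\epsilon\mathbf n$ converges to $\Gamma_m$, so entropy monotonicity gives $\lambda(\Gamma_{m,n}+\epsilon\mathbf n)\ge\lambda(\Gamma_m)$; sending $\epsilon\to0$ then requires continuity of $\lambda$ at the shrinker, and establishing that continuity (via a quantitative localization of the supremum in $(x_0,t_0)$) is the main technical work of the paper's Section~2. Your proposal supplies no substitute for this ingredient.

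Regarding the step you flag as the principal obstacle (turning number of the blow-up limit), the paper's treatment is lighter than you anticipate: the type~I hypothesis is a global curvature bound, so Huisken's theorem gives smooth convergence of the entire rescaled curve to a compact shrinker; rescaled CSF is a regular homotopy and hence preserves turning number, and $C^\infty$ convergence of curvatures forces convergence of total curvature. No Angenent-type structure analysis is needed.

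For A.2 the approaches genuinely diverge. The paper computes the spectrum of the Jacobi operator of $\Gamma_m$, obtains entropy index $2\lceil\sqrt 2\,m\rceil-4>0$ for $m\ge2$, and then applies Colding--Minicozzi to produce arbitrarily $C^\infty$-small perturbations of $\Gamma_m$ with strictly smaller entropy. Your multi-scale construction is plausible in spirit, but the claimed bound $\lambda(\Gamma')\le\lambda(\Gamma_1)+\epsilon$ is too optimistic: at the scale of any one loop the Gaussian also sees the connecting arcs, which at that scale look like half-lines and contribute on the order of $1$, so one should expect $\lambda(\Gamma')$ near $\lambda(\Gamma_1)+1$ rather than $\lambda(\Gamma_1)+\epsilon$. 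That weaker bound would still beat $\lambda(\Gamma_m)=m\lambda(\Gamma_1)$ for $m\ge2$, so the construction may be salvageable with a careful supremum over all $(x_0,t_0)$. Note, however, that the paper's instability route delivers something your construction cannot: the entropy-decreasing curves can be taken arbitrarily $C^\infty$-close to $\Gamma_m$, which is precisely what is needed later for the piecewise CSF of Theorem~B.
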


Straightforward calculations show that
\be\label{eq:entropy of m-covered circle}
\lambda(\Gamma_m)=m\cdot\lambda(\Gamma_1)=m\sqrt{2\pi/e}.
\ee
In other words, the entropy of the $m$-covered circle is $m$ times the entropy of the embedded circle. 

An important special case of Theorem \ref{thm:main theorem type I singularity then entropy has lower bound} is that $\Gamma\subset \R^2$ is a closed shrinker of turning number $m$. Closed shrinkers for the CSF are called {\it Abresch-Langer curves} \cite{AbLa}. For any pair of relatively prime integers $m,n\in \N$ satisfying \be \frac{1}{2}<\frac{m}{n}<\frac{\sqrt{2}}{2},\ee there exists a closed convex shrinker $\Gamma_{m,n}$ with turning number $m$ and whose curvature has $2n$ critical points. 
Moreover, every closed shrinker is of this form (other than the round circles). 
Theorem \ref{thm:main theorem type I singularity then entropy has lower bound} implies that 
\be
\lambda(\Gamma_{m,n})\geq\lambda(\Gamma_p)
\ee
for every Abresch-Langer curve $\Gamma_{m,n}$ and every $p$-covered circle $\Gamma_p$ with $m\geq p$. The normalization constant $(4\pi)^{-n/2}$ in the definition of entropy (\ref{eq:entropy definition}) is chosen so that hyperplanes have entropy $1$. Moreover,
\be\label{eq:product property of entropy}
\lambda(\Sigma)=\lambda(\Sigma\times \R)
\ee 
for all hypersurfaces $\Sigma\subset \R^{n+1}$. Using this fact, Theorem \ref{thm:main theorem type I singularity then entropy has lower bound} immediately implies entropy bounds on higher dimensional shrinkers $\Sigma\subset \R^{n+1}$ of the form $\Sigma =\Gamma\times \R^{n-1}$, where $\Gamma$ is a closed planar shrinker.

The key idea for proving Theorem \ref{thm:main theorem existence of perturbation reduce the entropy} is the notion of {\it entropy instability}. A shrinker $\Sigma\subset \R^{n+1}$ is {\it entropy unstable} if there exists a variation $(\Sigma_{\epsilon})_{\epsilon\in(-\delta,\delta)}$ of $\Sigma$ which decreases entropy. Entropy stability of embedded shrinkers was first introduced by Colding and Minicozzi in \cite{CoMi}. Our method to construct closed curves with turning number $m$ but entropy less than that of the $m$-covered circle $\Gamma_m$ is to show that $\Gamma_m$ is entropy unstable for all $m\geq 2$. We not only show that multiply-covered circles are entropy unstable, but also that {\it all} closed shrinkers $\Gamma\subset \R^2$ other than the embedded circle are entropy unstable. Furthermore, we calculate their {\it $F$-index}, which measures the number of linearly independent variations which reduce the entropy of the shrinker (see Liu \cite{Li}). 

Based on their work on the stability of self-shrinkers, Colding and Minicozzi defined a {\it piecewise MCF} starting at a closed hypersurface $\Sigma\subset \R^{n+1}$ as a finite collection of MCFs $\Sigma_t^i$ on time intervals $[t_i,t_{i+1}]$ so that each $\Sigma_{t_{i+1}}^{i+1}$ is a graph over $\Sigma_{t_{i+1}}^i$ of a function $u_{i+1}$ and
\begin{align}
\lambda(\Sigma_{t_{i+1}}^{i+1})&\leq \lambda(\Sigma_{t_{i+1}}^i). \label{eq:entropy condition 1}
\end{align}
Piecewise MCF provides an ad hoc notion of generic MCF that is important in many applications: it provides a method of continuing a flow through unstable singularities. If a MCF reaches an entropy unstable singularity, the flow is slightly perturbed to decrease entropy so that the singularity can never re-occur along the flow. Colding and Minicozzi proposed a piecewise MCF for closed embedded surfaces in $\R^3$ which becomes extinct in a round point \cite{CoMi}. We realize a similar result by constructing a piecewise CSF for closed immersed curves in $\R^2$.

\begin{namedtheorem}[\hypertarget{thm:PCSF}{Theorem B}]
Let $\Gamma\subset \R^2$ be a closed immersed curve. Then there exists a piecewise CSF starting at $\Gamma$ and defined up to a time $T>0$ at which the flow either becomes extinct in an embedded circle, or has type II singularities. Moreover, if $\Gamma$ has turning number greater than 1, the latter case holds.
\end{namedtheorem}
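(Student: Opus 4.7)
The plan is to construct the piecewise CSF inductively, exploiting the entropy instability of every closed plane shrinker other than the embedded circle. Set $\Gamma^{(0)}=\Gamma$. Any closed plane curve has finite extinction time under the CSF, so the flow starting at $\Gamma^{(0)}$ reaches a first singular time $T_0$. If the singularity at $T_0$ is of type II, halt the construction and set $T=T_0$. Otherwise it is of type I, and a Huisken blow-up subconverges to a closed plane shrinker $\Sigma_0$; by the Abresch--Langer classification, $\Sigma_0$ is either an embedded circle $\Gamma_1$, a multi-covered circle $\Gamma_p$ with $p\geq 2$, or an Abresch--Langer curve $\Gamma_{m',n'}$. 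If $\Sigma_0=\Gamma_1$, the flow becomes extinct in an embedded circle and we halt.

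In the remaining cases the paper's instability result furnishes a curve near $\Sigma_0$ with strictly smaller entropy than $\Sigma_0$. Since the rescaled CSF near $T_0$ is $C^k$-close to $\Sigma_0$ on compact subsets, I would transplant this instability to a graph perturbation $\Gamma^{(1)}$ of $\Gamma^{(0)}_{T_0-\delta}$ for some small $\delta>0$, satisfying the piecewise entropy condition $\lambda(\Gamma^{(1)})\leq\lambda(\Gamma^{(0)}_{T_0-\delta})$ together with the strict inequality $\lambda(\Gamma^{(1)})<\lambda(\Sigma_0)$. Restart the CSF from $\Gamma^{(1)}$ and iterate, producing curves $\Gamma^{(0)},\Gamma^{(1)},\ldots$ with singularity models $\Sigma_0,\Sigma_1,\ldots$ obeying
\begin{equation*}
\lambda(\Sigma_{i+1})\leq\lambda(\Gamma^{(i+1)})<\lambda(\Sigma_i).
\end{equation*}

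To see that the iteration terminates, observe that by Theorem~A.1 every closed shrinker of turning number $m'$ has entropy at least $m'\sqrt{2\pi/e}$, so any shrinker with entropy at most $\lambda(\Gamma)$ has turning number bounded by a constant $M=M(\Gamma)$. For each $m'\leq M$ the Abresch--Langer range $m'\sqrt{2}<n'<2m'$ admits only finitely many coprime $n'$, and there are only $M$ multi-covered circles with turning number at most $M$, so the set of closed plane shrinkers with entropy $\leq\lambda(\Gamma)$ is finite; the strictly decreasing sequence $\lambda(\Sigma_i)$ in this finite set must therefore terminate in finitely many steps, yielding either extinction in an embedded circle or a type II singularity.

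Finally, turning number is preserved along every smooth CSF segment by Gauss--Bonnet and under every $C^1$-small perturbation by Whitney--Graustein regular homotopy invariance, so every $\Gamma^{(i)}$ has turning number $m$. If $m>1$, extinction in the embedded circle $\Gamma_1$ (turning number $1$) is impossible, forcing termination at a type II singularity. The hardest step to implement is the transplantation in the second paragraph: extracting an honest finite entropy drop for the perturbed curve from an infinitesimal variation at the shrinker level requires quantitative control of the convergence of the rescaled CSF to $\Sigma_0$ in an appropriate weighted norm, analogous to (but requiring adaptation from) Colding--Minicozzi's corresponding construction for hypersurfaces.
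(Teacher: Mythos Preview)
Your proposal is correct and follows essentially the same route as the paper: iterate the CSF, and at each type~I singularity modeled on an entropy-unstable shrinker, use the instability to perturb to a nearby curve with entropy strictly below that of the singularity model, then restart. The transplantation step you flag as hardest is packaged in the paper as Lemma~\ref{lem:PCSF is possible}, whose proof rests on the continuity of entropy near a closed shrinker (Theorem~\ref{thm:entropy achieved in compact set} and Corollary~\ref{cor:continuity of entropy near a shrinker}) rather than on weighted-norm estimates.

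The one genuine difference is your termination argument. You invoke Theorem~\ref{thm:main theorem type I singularity then entropy has lower bound} to bound the turning number of each $\Sigma_i$ by roughly $\lambda(\Gamma)/\sqrt{2\pi/e}$ and then count shrinkers of bounded turning number. The paper instead uses that both the piecewise CSF and the type~I blow-up preserve turning number exactly (the latter is established in the proof of Theorem~\ref{thm:main theorem type I singularity then entropy has lower bound}), so every $\Sigma_i$ has turning number precisely $m$; finiteness of the closed shrinkers of turning number $m$ then gives termination directly. Your argument is valid, but since you already establish turning-number preservation for the ``moreover'' clause, the paper's route is shorter and avoids the extra appeal to Theorem~\ref{thm:main theorem type I singularity then entropy has lower bound}.
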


It is worth noting that the break-points of the piecewise CSF constructed in Theorem \bref{thm:PCSF}{B} can be made arbitrarily small in the $C^{\infty}$ norm. Furthermore, a result of Gage-Hamilton \cite{GaHa} and Grayson \cite{Gr} states that the CSF starting at a closed embedded plane curve becomes convex and eventually extincts in a ``round point''. See \cite{And-Bry} for an elegant direct proof. It follows that the piecewise CSF of Theorem \bref{thm:PCSF}{B} starting at a closed embedded plane curve reduces to the usual CSF whose only singularity is modelled by an embedded circle. 

A singularity of a MCF defined up to time $T$ is of {\it type I} if the curvature blows up no faster than $(T-t)^{-1/2}$; otherwise, the singularity is of {\it type II}. Type I singularities are important in the singularity analysis of MCF because Huisken \cite{Hu} showed that a blow up limit of a type I singularity is a smooth shrinker. More generally, the limit of a blowup sequence of a MCF at a fixed spacetime point is modeled on a possibly singular shrinker \cite{Hu},\cite{Il94}. The study of shrinkers is therefore central in the analysis of singularity formation of the MCF. Shrinkers can equivalently be defined as immersed hypersurfaces $\Sigma\subset \R^n$ satisfying the equation
\be\label{eq:shrinker equation}
H=\frac{1}{2}\langle x,\mathbf{n}\rangle.
\ee 
Straightfoward calculations show that the hyperplanes passing through the origin, the round spheres of radius $\sqrt{2n}$ centered at the origin, and the cylinders of the form $S^k\times \R^{n-k}\subset \R^{n+1}$ are all shrinkers. Much work has concerned the construction and classification of shrinkers \cite{An},\cite{Bre},\cite{KaKlMo}, and for the case $n=1$, a complete classification is known \cite{AbLa}. 

In \cite{St}, Stone computed the entropy of embedded spheres (and thus also of shrinking cylinders), implying that \be 2>\lambda(S^1)> \lambda(S^2)> \cdots\to\sqrt{2}.\ee In \cite{Bra}, Brakke proved that hyperplanes have least entropy among all shrinkers, and that there is a gap between the entropy of a plane and the entropy of the next lowest shrinker. Colding-Ilmanen-Minicozzi-White \cite{CIMW} proved that the sphere is the shrinker with lowest entropy among all closed embedded shrinkers. This result has been generalized by Bernstein-Wang \cite{BeWa}, Zhu \cite{Zh}, and Ketover-Zhou \cite{KeZh}: Bernstein-Wang proved that the sphere has lowest entropy among all closed embedded hypersurfaces in $\R^{n+1}$ where $2\leq n\leq 6$; Ketover-Zhou proved the same result by using an alternative min-max method; Zhu removed the dimension restriction.  Very recently, Hershkovits and White \cite{HeWh} proved a lower bound and rigidity theorem for shrinkers in higher codimension: a closed embedded shrinker $\Sigma\in \R^{n+1}$ with non-trivial $k$-th homology has entropy greater than or equal to the entropy of $S^k$, and equality holds if and only if $\Sigma =S^k\times \R^{n-k}$. 

Our results can be viewed as an extension of these results to the case of closed {\it immersed} curves, providing the first lower entropy bounds for immersed shrinkers. The main difference between the embedded case and the immersed case is that immersed MCFs lack many of the nice properties of embedded MCFs. For example, the avoidance principle implies that embeddedness is preserved under MCF. Also, the level set flow weak solutions are easier to handle in the embedded case. Fortunately, the immersed CSF has simpler geometry, allowing us to control the behavior of the immersed curve under CSF.

We highlight that our theorem detects long time behaviour of CSF from the information of the initial data. Although the monotonicity formula for entropy can be used to rule out high entropy shrinkers as singularity models of a mean curvature flow, it does not tell us whether the singularities are type I or type II. Our theorem provides a criterion to show that the type II singularities must occur from the initial data.

Stability of Abresch-Langer curves has been studied in a different context by Epstein-Weinstein \cite{EpWe}. Our results show that the Jacobi operator of an Abresch-Langer curve has the same number of negative eigenvalues as the linear stability operator studied by Epstein-Weinstein. Their analysis does not incorporate the action of the group generated by rigid motions and dilations.

\paragraph{Outline of the Proofs.}
We now outline the proof of Theorem \ref{thm:main theorem type I singularity then entropy has lower bound}. 

The key result we use to conclude the proof is due to Au \cite{Au}. Recall that Au proved the following theorem in \cite{Au}.

\begin{thm}[Main Theorem of \cite{Au}, part (a)]
Let $\Gamma_{m,n}$ be an Abresch-Langer curve with turning number $m$ and $\gamma_0=\Gamma+\epsilon\mathbf{n}$ a perturbation with $|\epsilon|$ small. Then if $\epsilon>0$, the curve shortening flow $\gamma_t$ starting from $\gamma_0$ is asymptotic to an $m$-covered circle.
\end{thm}

Given a closed curve $\Gamma$ with turning number $m$, we run the CSF starting from this curve. If the flow hits a type I singularity, then after rescaling it must be an Abresch-Langer curve. The continuity of entropy shows that the entropy of $\Gamma$ is bounded from below by the entropy of this Abresch-Langer curve. Using Au's result and the monotonicity property of entropy, we show that this Abresch-Langer curve has entropy bounded from below by the entropy of the $m$-covered circle. Thus we conclude the proof.

The main technical challenge is showing that the entropy functional is continuous under small perturbations near a closed shrinker $\Gamma$. Note that the entropy functional is not continuous in general: a sequence of rescalings at a point on a sphere converges to a hyperplane in the limit, however, a hyperplane has entropy 1, while a sphere has entropy at least $\sqrt{2}$. We prove that the entropy functional is continuous near a closed shrinker $\Gamma\subset \R^2$ in two steps. 

Firstly, Proposition \ref{prop:continuous when compact} is a general result giving a sufficient condition for the $F$-functional being continuous at $\Sigma$. More precisely, when $(x_0,t_0)$ lies in a compact subset $K\subset \R^{n+1}\times (0,\infty)$ and variations of $\Sigma$ lie in a bounded subset $\mathcal{B}\subset C^{2,\alpha}(\Sigma)$, the $F$-functional is continuous. 

The second step is to provide a quantitative version of \cite[Lemma 7.7]{CoMi} to show that we may in fact apply Proposition \ref{prop:continuous when compact} to closed shrinkers $\Gamma\subset \R^2$. To that end, Theorem \ref{thm:entropy achieved in compact set} shows that the entropy of the perturbed curve $\Gamma+f\mathbf{n}$ with $f\in\mathcal{B}$ is achieved in $B_R(0)\times [T_0,T_1]$ for some $R>0$ and some $0<T_0<T_1<\infty$. The main technical challenge is to show that $T_0>0$. We discuss the details in Section \ref{S:Properties of Entropy}.

\bigskip
Next, we give an outline of the proof of Theorem \ref{thm:main theorem existence of perturbation reduce the entropy}. Our method to construct closed curves with turning number $m$ but entropy less than that of the $m$-covered circle $\Gamma_m$ is to show that $\Gamma_m$ is entropy unstable for all $m\geq 2$. Recall that the {\it $F$-functional} is defined to be
\be 
F_{x_0,t_0}(\Sigma)=(4\pi t_0)^{-n/2}\int_{\Sigma}e^{-\frac{|x-x_0|^2}{4t_0}d\mu}.
\ee
The entropy $\lambda(\Sigma)$ is then the supremum over all translations $x_0\in \R^{n+1}$ and dilations $t_0>0$ of the $F$-functional. It follows from Huisken's monotonicity formula \cite{Hu} that shrinkers are precisely the critical points of the $F_{0,1}$-functional. The $F_{0,1}$-functional is called the {\it Gaussian area}. Therefore, shrinkers are precisely the minimal hypersurfaces of $\R^{n+1}$ equipped with the conformally changed metric $g_{ij}=e^{-\frac{|x|^2}{2n}}\delta_{ij}$. We show that the entropy index of a shrinker is bounded below by the difference between the Morse index\footnote{Recall that the Morse index of a minimal surface $\Sigma\subset (M,g)$ is defined as the number of negative eigenvalues of the Jacobi operator determined by $(M,g)$ and $\Sigma$ (see \cite[p.\ 41]{CoMi-MinSur}).} of the shrinker (considered as a minimal surface of $(\R^{n+1},e^{-\frac{|x|^2}{2n}}\delta_{ij})$) and the dimension of the space spanned by the mean curvature function and the component functions of the normal vector field of the shrinker. Consequently, we reduce the calculation of the entropy index of a shrinker to the calculation of the Morse index. For $1$-dimensional shrinkers in $\R^2$, the Jacobi operator reduces to a 1-dimensional Sturm-Liouville operator. Using the well-developed theory of spectra of Sturm-Liouville operators, we are able to determine exactly how many negative eigenvalues the Jacobi operator of a 1-dimensional closed shrinker has, and thus also the entropy index.

Lastly, we outline the proof of Theorem \bref{thm:PCSF}{B}. As mentioned above, the possible singularities of the CSF starting at $\Gamma$ are classified into two categories: type I and type II. Huisken \cite[Theorem 3.5]{Hu} shows that any rescaling of a type I singularity is a shrinker. By Abresch and Langer's classification of shrinkers for the CSF \cite{AbLa}, any rescaling of a type I singularity must therefore be an Abresch-Langer curve, or a (multiply-covered) circle. Corollary \ref{cor:stability of closed plane shrinkers} states that the only entropy stable closed singularity for CSF is the embedded circle.

Let $\Gamma$ be any closed immersed plane curve, which is the initial data for a CSF. If the singularity of the CSF is of type I, any blowup gives an Abresch-Langer curve or a circle, say $\tilde{\Gamma}_{\infty}$. If $\tilde{\Gamma}_{\infty}$ is not an embedded circle, Lemma \ref{lem:PCSF is possible} shows that the time slice of the flow can be slightly perturbed to a curve $\Gamma'$ with $\lambda(\Gamma')<\lambda(\tilde{\Gamma}_{\infty})$. Consequently, $\tilde{\Gamma}_{\infty}$ can never appear as a singularity for the flow starting from $\Gamma'$. The above process is then repeated with $\Gamma'$ instead of $\Gamma$. Since the perturbations can be made with arbitrarily small $C^{\infty}$-norm, the perturbations preserve turning number. Therefore piecewise CSF preserves turning number, and since there are only finitely many closed shrinkers of a given turning number, the piecewise flow terminates after finitely many perturbations.

\paragraph{Organization of the Paper.}
In Section \ref{S:Properties of Entropy} we prove some basic properties of the entropy functional for immersed hypersurfaces and shrinkers. In particular we prove the continuity Theorem \ref{thm:entropy achieved in compact set} for entropy, which is central to our argument.

In Section \ref{S:Entropy and Turning Number}, we prove Theorem \ref{thm:main theorem type I singularity then entropy has lower bound}.

In Section \ref{S:Entropy Index and Morse Index}, we introduce and study some basic properties of index of the shrinkers. The results of this section hold for shrinkers of any dimension.

In Section \ref{S:Entropy Stability of CSF Singularities}, we compute the precise entropy index of closed immersed shrinkers for the CSF. As a corollary, we obtain Theorem \ref{thm:main theorem existence of perturbation reduce the entropy}.

In Section \ref{S:Generic CSF} we combine our results to prove Theorem \bref{thm:PCSF}{B}.

\paragraph{Notation.}
Throughout, all hypersurfaces $\Sigma\subset \R^{n+1}$ are assumed to be immersed and orientable, so that there exists a globally defined unit normal vector field $\mathbf{n}:\Sigma\to S^n$. We denote by $x:\Sigma\to \R^{n+1}$ the given immersion and by $\Sigma+f\mathbf{n}$ the normal variation of $\Sigma$ by a function $f:\Sigma\to \R$. The mean curvature of $\Sigma$ is denoted by $H:\Sigma \to \R$. One can show (see \cite{Ma}, for example ) that MCF is a geometric flow which is invariant under tangential reparameterization, so we may also use the image $(\Sigma_t)_{t\in[0,T)}$ to denote the flow. In the case of curves, we use the notation $(\Gamma_t)_{t\in [0,T)}$. Finally, angle brackets $\langle\, \cdot\,,\, \cdot\, \rangle$ denote the standard Euclidean inner product. 

\paragraph{Acknowledgments.}
The authors thank Professor W. Minicozzi for suggesting this topic and for his support, as well as Professor D. Maulik, Professor A. Moitra and Dr. S. Gerovitch for supporting this project via the Summer Program in Undergraduate Research. The first author is indebted to C. Mantoulidis for introducing him to the mean curvature flow and for many interesting discussions. 

\section{Properties of Entropy}\label{S:Properties of Entropy}

\paragraph{}
In this section, we will prove that the entropy functional is continuous near a closed planar shrinker under certain conditions. In general, the entropy functional is not a continuous functional on the space of immersed hypersurfaces (with the $C^{\infty}$ topology, say). The standard example is given by blowing up a sphere at a point, giving a hyperplane in the limit. Each element of the blowup sequence is a sphere, thus having entropy $\sqrt{2}$. However, a hyperplane has entropy $1$. The results of this section will be used in the Section \ref{S:Entropy and Turning Number} to prove Theorem \ref{thm:main theorem type I singularity then entropy has lower bound}.

The following proposition shows that, when a closed hypersurface is slightly perturbed, the $F$-functional cannot change too much under the perturbation, as long as $(x_0,t_0)$ are confined to a compact subset of $\R^{n+1}\times (0,\infty)$. From now on, in this paper, we fix $\alpha\in (0,1)$ once and for all.

\begin{prop}\label{prop:continuous when compact}
Let $\Sigma\subset \R^{n+1}$ be a closed hypersurface and $K\subset \R^{n+1}\times (0,\infty)$ be a compact subset. Then there exist $\epsilon_0>0$ and a constant $C$, both depending only on $\Sigma$ and $K$, such that, for all $f\in C^{2,\alpha}(\Sigma)$ with $\|f\|_{C^{2,\alpha}}\leq 1$, all $\epsilon\geq 0$ with $\epsilon<\epsilon_0$, and all $(x_0,t_0)\in K$, the following inequality holds: \be \left|F_{x_0,t_0}(\Sigma_{\epsilon})-F_{x_0,t_0}(\Sigma)\right|\leq C\epsilon ,\ee where $\Sigma_{\epsilon}=\Sigma+ \epsilon f\mathbf{n}$.
\end{prop}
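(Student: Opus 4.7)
The plan is to pull everything back to $\Sigma$ itself via the map $F_\epsilon \colon \Sigma \to \R^{n+1}$, $F_\epsilon(p) = x(p) + \epsilon f(p)\mathbf{n}(p)$, and bound the change in the integrand and the change in the area element separately, each by $C\epsilon$ with constants depending only on $\Sigma$ and $K$. Concretely, one writes
\[
F_{x_0,t_0}(\Sigma_\epsilon) = (4\pi t_0)^{-n/2}\int_\Sigma e^{-|F_\epsilon(p) - x_0|^2/4t_0}\, J_\epsilon(p)\, d\mu,
\]
where $J_\epsilon$ is the Jacobian of $F_\epsilon$ relative to the metric induced by $x$. First I would fix $\epsilon_0 > 0$ small (depending only on $\Sigma$, via a lower bound on the injectivity radius of the normal exponential map) so that $F_\epsilon$ is a smooth immersion for all $\epsilon \in [0,\epsilon_0)$ and every admissible $f$. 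A direct computation using the Weingarten relation $\partial_i\mathbf{n} = -h_{ij}g^{jk}\partial_k x$ gives
\[
J_\epsilon(p) = 1 - \epsilon f(p)H(p) + \epsilon^2 R(p,\epsilon,f),
\]
where the remainder $R$ is polynomial in $f$, $df$, and components of the second fundamental form of $\Sigma$. Since $\Sigma$ is compact and $\|f\|_{C^{2,\alpha}} \leq 1$ supplies uniform $C^1$ control on $f$, this yields $|J_\epsilon - 1| \leq C_1\epsilon$ with $C_1 = C_1(\Sigma)$.

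Next I would estimate the Gaussian weight. The identity
\[
|F_\epsilon(p) - x_0|^2 - |x(p) - x_0|^2 = 2\epsilon f(p)\langle \mathbf{n}(p), x(p) - x_0\rangle + \epsilon^2 f(p)^2,
\]
together with the elementary Lipschitz bound $|e^{-a} - e^{-b}| \leq |a-b|$ valid for $a,b\geq 0$, and the fact that $|x(p) - x_0|$ and $1/t_0$ are uniformly bounded on $\Sigma \times K$, give
\[
\left|e^{-|F_\epsilon(p) - x_0|^2/4t_0} - e^{-|x(p) - x_0|^2/4t_0}\right| \leq C_2\epsilon
\]
for a constant $C_2 = C_2(\Sigma,K)$, uniformly in $p\in\Sigma$, admissible $f$, and $(x_0,t_0)\in K$. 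The normalization $(4\pi t_0)^{-n/2}$ is also uniformly bounded on $K$.

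Finally I would combine these two bounds by writing
\[
F_{x_0,t_0}(\Sigma_\epsilon) - F_{x_0,t_0}(\Sigma) = (4\pi t_0)^{-n/2}\int_\Sigma\bigl[(e_\epsilon - e_0)J_\epsilon + e_0(J_\epsilon - 1)\bigr]\,d\mu,
\]
where $e_\epsilon$ denotes the pulled-back Gaussian at parameter $\epsilon$, and estimating each term using the trivial bound $|e_0|\leq 1$ together with the two estimates above. Multiplying by the fixed area of $\Sigma$ produces the desired constant $C = C(\Sigma,K)$. The main obstacle is really one of uniformity: every implicit constant must depend only on $\Sigma$ and $K$ and not on the particular $f$, which is exactly what the hypothesis $\|f\|_{C^{2,\alpha}} \leq 1$ is designed to provide. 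Uniform $C^1$ control on $f$ already suffices here, and the stronger $C^{2,\alpha}$ bound is presumably imposed with later arguments in mind, where higher regularity of $\Sigma_\epsilon$ will be needed.
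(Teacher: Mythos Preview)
Your proof is correct and follows essentially the same strategy as the paper: pull the $F$-functional back to $\Sigma$ via $p\mapsto x(p)+\epsilon f(p)\mathbf{n}(p)$ and use compactness of $\Sigma$ and $K$ together with the uniform $C^{2,\alpha}$ bound on $f$ to control the integrand by $C\epsilon$. The only cosmetic difference is that the paper bundles the Gaussian and the volume element into a single function $G_{x_0,t_0}(\beta,x)=e^{-|x-x_0+\beta f\mathbf{n}|^2/4t_0}\sqrt{\tilde g}$, bounds $\partial G/\partial\beta$ uniformly via Jacobi's formula, and applies the fundamental theorem of calculus, whereas you split the product and estimate the two factors separately; the content is the same.
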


\begin{proof}
Let $x:\Sigma \to \R^{n+1}$ be the given immersion and let $f\in C^{2,\alpha}(\Sigma)$ with $\|f\|_{C^{2,\alpha}}\leq 1$. In local coordinates around a point $p\in \Sigma$, we can write
\begin{align*}
\tilde{g}_{ij}
&= \Big\langle \partial_i (x +\epsilon f\mathbf{n}),\, \partial_j (x +\epsilon f\mathbf{n})\Big\rangle \\
&=g_{ij}+2\epsilon fh_{ij}+\epsilon^2f^2h_{ik}h^k_{\,j}+\epsilon^2(\partial_i f)(\partial_j f),
\end{align*} where $g_{ij}$ is the metric induced on $\Sigma$ as a hypersurface in $\R^{n+1}$, and $\tilde{g}_{ij}$ is the metric induced on $\Sigma_\epsilon$. Since $\Sigma$ is closed, there exists $\epsilon_0>0$, such that, for any $f\in C^{2,\alpha}(\Sigma)$ with $\|f\|_{C^{2,\alpha}}\leq 1$ and any $\epsilon\in \R$ satisfing $|\epsilon|<\epsilon_0$, the tensor $\tilde{g}$ defines a metric on $\Sigma_{\epsilon}$ and the volume measure $\sqrt{\tilde{g}}$ and the derivative $\partial\tilde{g}/\partial \epsilon$ are uniformly bounded on compact sets. Define $$G_{x_0,t_0}(\beta,x)=e^{-\frac{|x-x_0+\beta f\mathbf{n}|^2}{4t_0}}\sqrt{\tilde{g}}.$$ Then Jacobi's formula gives
\begin{align*}
\frac{\partial G_{x_0,t_0}(\beta,x)}{\partial \beta}
&=\frac{1}{2}\left(\mathrm{Tr}\left(\tilde{g}^{-1}\frac{\partial \tilde{g}}{\partial \beta}\right)-\frac{f\langle x-x_0,\mathbf{n}\rangle+\beta f^2}{t_0}\right)e^{-\frac{|x-x_0+\beta f\mathbf{n}|^2}{4t_0}}\sqrt{\tilde{g}},
\end{align*}
which is uniformly bounded on $K$ by a constant depending only on $\epsilon_0$ and $K$. In particular, the constant is independent of the choice of $f\in C^{2,\alpha}(\Sigma)$ with $\|f\|_{C^{2,\alpha}}\leq 1$. Therefore
\begin{align*}
\left|F_{x_0,t_0}(\Sigma_{\epsilon})-F_{x_0,t_0}(\Sigma)\right| 
&=\left|\frac{1}{\sqrt{4\pi t_0}}\int_{\Sigma}(G_{x_0,t_0}(\epsilon,x)-G_{x_0,t_0}(0,x))dx\right|\\
&\leq \frac{1}{\sqrt{4\pi t_0}}\int_{\Sigma}\left|G_{x_0,t_0}(\epsilon,x)-G_{x_0,t_0}(0,x)\right|dx \\
&= \frac{\epsilon}{\sqrt{4\pi t_0}}\int_{\Sigma}\left|\int_0^1\frac{\partial G_{x_0,t_0}(\epsilon u,x)}{\partial \beta}du\right|dx \\
&\leq \frac{\epsilon}{\sqrt{4\pi t_0}}\int_{\Sigma}\int_0^1\left|\frac{\partial G_{x_0,t_0}(\epsilon u,x)}{\partial \beta}\right|du\, dx \\
&\leq \epsilon C
\end{align*}
as desired.
\end{proof}

\paragraph{}
In the rest of this section, we will focus on the case of curves in the plane. The main result of this section, below, shows that there exists a compact subset of $\R^2\times (0,\infty)$ such that the entropy of any curve obtained by perturbing a given closed plane shrinker is attained in this compact set. As a consequence, we may apply the previous proposition to conclude that the entropy functional is continuous under small perturbations of a closed plane shrinker. Theorem \ref{thm:entropy achieved in compact set} may be viewed as a quantitative version of \cite[Lemma 7.7]{CoMi} for immersed curves.

\begin{thm}\label{thm:entropy achieved in compact set}
Let $\Gamma\subset\R^{2}$ be a closed shrinker. Then there exist $\epsilon_0>0$ and a compact subset $K\subset \R^{2}\times (0,\infty)$, both depending only on $\Gamma$, such that, for any variation $f\in C^{2,\alpha}(\Sigma)$ with $\|f\|_{C^{2,\alpha}}\leq 1$ and any $\epsilon \in \R$ with $|\epsilon|<\epsilon_0$, the following holds: \[\lambda(\Gamma_{\epsilon})=\sup_{(x_0,t_0)\in K}F_{x_0,t_0}(\Gamma_{\epsilon}),\] where $\Gamma_{\epsilon}=\Gamma+\epsilon f\mathbf{n}$.
\end{thm}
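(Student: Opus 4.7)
The plan is to find a compact $K=\overline{B_R(0)}\times[T_1,T_2]\subset\R^2\times(0,\infty)$ outside of which $F_{x_0,t_0}(\Gamma_\epsilon)$ stays strictly below $\lambda(\Gamma_\epsilon)$, uniformly in $f$ with $\|f\|_{C^{2,\alpha}}\leq 1$ and $|\epsilon|<\epsilon_0$. This amounts to ruling out the three escape regimes $t_0\to\infty$, $|x_0|\to\infty$ (with $t_0$ bounded), and $t_0\to 0^+$. The first two regimes are elementary; the third is the main obstacle, and is where Au's dynamical result enters.

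The large-$t_0$ regime is handled by the crude bound $F_{x_0,t_0}(\Gamma_\epsilon)\leq L(\Gamma_\epsilon)/\sqrt{4\pi t_0}$ together with a uniform length bound, which forces $F\to 0$ as $t_0\to\infty$ and supplies $T_2$. The large-$|x_0|$ regime at $t_0\leq T_2$ follows from the uniform diameter bound on $\Gamma_\epsilon$ and Gaussian decay of the heat kernel, supplying $R$.

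For the small-$t_0$ regime, the plan is to show that $\lambda(\Gamma_\epsilon)$ strictly exceeds the $t_0\to 0^+$ limit of $F_{x_0,t_0}(\Gamma_\epsilon)$, uniformly in $(x_0,f,\epsilon)$. For a smooth closed immersed curve with uniformly bounded $C^{2,\alpha}$ geometry, a direct Taylor expansion of the local sheets passing within $\sqrt{t_0}$ of $x_0$ gives
\[
\limsup_{t_0\to 0^+}\;\sup_{x_0\in\R^2}F_{x_0,t_0}(\Gamma_\epsilon)\leq\mu,
\]
where $\mu$ is the maximum self-intersection multiplicity of $\Gamma$, which also bounds that of $\Gamma_\epsilon$ for $\epsilon$ small. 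On the other hand, since $\Gamma$ is a shrinker, $\lambda(\Gamma)=F_{0,1}(\Gamma)$, so \aref{prop:continuous when compact} applied on the compact set $\{(0,1)\}$ yields $\lambda(\Gamma_\epsilon)\geq F_{0,1}(\Gamma_\epsilon)\geq\lambda(\Gamma)-C\epsilon$. It therefore suffices to establish the strict inequality $\lambda(\Gamma)>\mu$ for the unperturbed shrinker $\Gamma$.

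For the $m$-covered circle this reads $\lambda(\Gamma_m)=m\sqrt{2\pi/e}>m=\mu$, which is immediate from $\sqrt{2\pi/e}>1$. For an Abresch-Langer curve $\Gamma_{m,n}$, one has $m\geq 2$ and $\mu\leq m$, so it suffices to establish $\lambda(\Gamma_{m,n})\geq\lambda(\Gamma_m)$. This is where Au's theorem enters: the rescaled CSF starting from $\Gamma_{m,n}+\delta\mathbf{n}$ converges to the $m$-covered circle $\Gamma_m$ for all sufficiently small $\delta>0$, and Huisken's monotonicity formula applied along the unrescaled CSF at its type I singular point $(x_\ast(\delta),T(\delta))$ gives
\[
F_{x_\ast(\delta),T(\delta)}(\Gamma_{m,n}+\delta\mathbf{n})\geq\lambda(\Gamma_m).
\]
As $\delta\to 0^+$, the pair $(x_\ast(\delta),T(\delta))$ is confined to a bounded region of $\R^2\times(0,\infty)$; extracting a convergent subsequence and invoking joint continuity of $F$ yields $\lambda(\Gamma_{m,n})=F_{0,1}(\Gamma_{m,n})\geq\lambda(\Gamma_m)$, as required. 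Choosing $T_1>0$ and $\epsilon_0>0$ small enough that the resulting gap between the two bounds is preserved then produces the desired compact $K$.
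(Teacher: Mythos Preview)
Your three-regime decomposition matches the paper's exactly, and the large-$t_0$ and large-$|x_0|$ regimes are handled the same way (the paper actually confines $x_0$ to the convex hull via a first-variation argument from \cite{CoMi}, but your Gaussian-decay version is equivalent for this purpose). Both arguments reduce the $t_0\to 0^+$ regime to the strict inequality $\lambda(\Gamma)>\mu$ for the unperturbed shrinker, where $\mu$ is the maximum local multiplicity, and then transfer to $\Gamma_\epsilon$ via Proposition~\ref{prop:continuous when compact}. The paper carries out the local-sheet count for small $t_0$ more carefully than your one-line Taylor expansion (splitting at scale $R_0^{-1}$ and bounding the number of components via the immersed tubular neighborhood of $\Gamma$), which is what makes the bound uniform in $(f,\epsilon)$.

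The substantive divergence is in how $\lambda(\Gamma)>\mu$ is obtained. The paper simply asserts it as a consequence of the fact that a shrinker achieves its entropy at $F_{0,1}$ \cite[Lemma~7.7]{CoMi}; Au's theorem is \emph{not} used here. In the paper's logical order, Au enters only in the proof of Theorem~\ref{thm:main theorem A-L curve lower bound}, which \emph{depends on} the present theorem via Corollary~\ref{cor:limit of entropy is entropy of limit 1}. You instead pull Au's theorem forward and essentially reprove the shrinker case of Theorem~\ref{thm:main theorem A-L curve lower bound} inside this proof. Your version escapes circularity only because you pass to the $\delta\to 0$ limit through continuity of $F$ at the specific pairs $(x_\ast(\delta),T(\delta))$ rather than through continuity of the entropy; you should make this distinction explicit.

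This detour also creates two gaps not present in the paper's argument. First, the claim $\mu\leq m$ for an Abresch--Langer curve $\Gamma_{m,n}$ (with $m$ the turning number) is stated without proof; the maximum number of branches through a self-intersection point is not obviously controlled by the turning number, and without it the chain $\lambda(\Gamma_{m,n})\geq\lambda(\Gamma_m)>m\geq\mu$ is broken. Second, confining $(x_\ast(\delta),T(\delta))$ to a compact subset of $\R^2\times(0,\infty)$ requires $T(\delta)$ bounded away from $0$; this does follow from continuous dependence of CSF on initial data (the flow from $\Gamma_{m,n}$ exists smoothly on $[0,1)$, so $\liminf_{\delta\to 0}T(\delta)\geq 1$), but you need to say so. Finally, you do not need the limit to be $(0,1)$: any subsequential limit $(y,s)$ with $s\in(0,\infty)$ already gives $\lambda(\Gamma_{m,n})\geq F_{y,s}(\Gamma_{m,n})\geq\lambda(\Gamma_m)$.
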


Before proving Theorem \ref{thm:entropy achieved in compact set}, let us recall an application of Huisken's monotonicity to self-shrinkers. Suppose $\Sigma$ is a self-shrinker in $\R^{n+1}$ (not necessarily embedded), then for any $y\in \R^{n+1}$ and $a\in \R$, the function 
\[g(s)=F_{sy,1+as^2}(\Sigma)\]
satisfies $g'(s)\leq 0$ for all $s>0$ with $1+as^2>0$. See \cite[Eq.\ 7.13]{CoMi}. Moreover, $g'(s)=0$ if and only if $as x^\bot+y^\bot=0$ for any $x\in\Sigma$, see \cite[Eq.\ 7.27]{CoMi}. As a corollary of this monotonicity formula, we have the following Lemma, see section 7.2 of \cite{CoMi}. Again, $\Sigma$ is not necessarily embedded, because the monotonicity formula also holds for an immersed mean curvature flow.

\begin{lemma}\label{Lem:entropy of a self-shrinker is achieved at F01}
Then entropy of a self-shrinker $\Sigma$ is only achieved at $F_{0,1}(\Sigma)$ if $\Sigma$ does not split along a line.
\end{lemma}

Now we are ready to prove Theorem \ref{thm:entropy achieved in compact set}.
\begin{proof}[Proof of Theorem \ref{thm:entropy achieved in compact set}]
Throughout, we will denote by $\Gamma^f_{\epsilon}$ the perturbed curve $\Gamma_{\epsilon}^f=\Gamma+\epsilon f\mathbf{n}$ if $f\in C^{2,\alpha}(\Gamma)$. 
As stated in the proof of the previous proposition, there exists an $\epsilon_0>0$ such that $\Gamma_\epsilon^f$ is a closed immersed curve for any $f\in C^{2,\alpha}(\Gamma)$ with $\|f\|_{C^{2,\alpha}}\leq 1$ and any $\epsilon\in \R$ with $|\epsilon|<\epsilon_0$. 
Let $\mathcal{B}$ be the set consisting of those $\Gamma_{\epsilon}^f$ with $f\in C^{2,\alpha}(\Sigma)$ such that $\|f\|_{C^{2,\alpha}}\leq 1$ and $\epsilon \in \R$ such that $|\epsilon|<\epsilon_0$. 

{\it Step (1): Bound on $|x_0|$.} By the compactness of $\Gamma$ and the uniform $C^{2,\alpha}$ bound of the variational functions, there exists $R>0$ (independent of $f$ and $\epsilon$) such that $\Gamma_{\epsilon}^f$ is contained in the closed ball $B_{R}(0)\subset \R^{2}$ of radius $R$ centered at the origin, which is compact.
Colding and Minicozzi \cite{CoMi} showed that for any closed hypersurface $M\subset \R^{n+1}$ and each fixed $t_0>0$, $\sup_{x_0\in \R^{n+1}}F_{x_0,t_0}(M)$ is achieved inside the convex hull of $M$. The convex hull of $M$ is compact since $M$ is closed. Therefore, for each fixed $t_0$ and all $\Gamma_{\epsilon}^f\in \mathcal{B}$, $\sup_{x_0\in \R^{2}}F_{x_0,t_0}(\Gamma_{\epsilon}^f)$ is attained in $B_{R}(0)\subset \R^{2}$.

{\it Step (2): Upper bound on $t_0$.} Let $L$ be the length of $\Gamma$. When $\epsilon_0$ is sufficiently small, the length of $\Gamma_\epsilon^f\in\mathcal{B}$ is bounded by $2L$. Let $T_1=(4\pi)^{-1}(3L)^{2}$. Then for any $t_0>T_1$, any $x_0\in B_R(0)$, and any curve $\Gamma_{\epsilon}^f\in \mathcal{B}$, we have 
\begin{align*}
F_{x_0,t_0}(\Gamma_{\epsilon}^f)
&\leq (4\pi t_0)^{-1/2}\mathrm{Length}(\Gamma_{\epsilon}^f) < (4\pi T_1)^{-1/2}2L = 2/3<1.
\end{align*}
However, we know that $\lambda(\Gamma_{\epsilon}^f)\geq 1$. This can be seen by zooming in at any point on the curve. We conclude that \[ \sup_{(x_0,t_0)\in B_{R}(0)\times (0,\infty)}F_{x_0,t_0}(\Gamma_{\epsilon}^f) =\sup_{(x_0,t_0)\in B_{R}(0)\times (0,T_1]}F_{x_0,t_0}(\Gamma_{\epsilon}^f), \] for all $\Gamma_{\epsilon}^f\in \mathcal{B}$. 

{\it Step (3): Lower bound on $t_0$.} We observe that for $\epsilon_0$ small enough, $\Gamma^f_\epsilon$ has a uniform curvature bound. This follows from the expression of curvature in terms of the first and second derivatives of the immersion. Therefore, we may assume that $|k|\leq k_0$ for all $\epsilon\leq \epsilon_0$.  Also, if $\epsilon_0$ is small, then the multiplicity of each self-intersection points of $\Gamma_\epsilon^f$ are the same as the multiplicity of the self-intersection points of $\Gamma$. Let $R_0>0$ be a constant whose value will be determined. Now for any fixed $x_0$ and any $t_0\leq R_0^{-2}$, we have
\begin{align*}
F_{x_0,t_0}(\Gamma^f_\epsilon)=(4\pi t_0)^{-1/2}\int_{\{|x-x_0|\geq R_0^{-1}\}\cap \Gamma^f_\epsilon}&e^{-\frac{|x-x_0|^2}{4t_0}}ds \\
&+(4\pi t_0)^{-1/2}\int_{\{|x-x_0|\leq R_0^{-1}\}\cap \Gamma^f_\epsilon}e^{-\frac{|x-x_0|^2}{4t_0}}ds.
\end{align*}
The first term on the right hand side is bounded by
\[(4\pi t_0)^{-1/2}e^{-\frac{R_0^{-2}}{4t_0}}\mathrm{Length}(\Gamma_{\epsilon}^f)\leq CR_0^{-1/2}\]for a universal constant $C$. 

Now we estimate the second term on the right hand side. By change of variables, this term equals
\[\int_{\{|x-x_0|\leq R_0^{-1}t_0^{-1/2}\}\cap (t_0^{-1/2}\Gamma^f_\epsilon)}e^{-\frac{|x-x_0|^2}{4}}ds,\]
where $t_0^{-1/2}\Gamma^f_\epsilon$ is the curve $\Gamma^f_\epsilon$ rescaled by $t_0^{-1/2}$ with center at $x_0$. Suppose $\{|x-x_0|\leq R_0^{-1}t_0^{-1/2}\}\cap (t_0^{-1/2}\Gamma^f_\epsilon)$ consists of the connected components $\gamma_1,\dots,\gamma_m$. Then each $\gamma_i$ is a curve with curvature uniformly bounded by $k_0t_0$. In particular, for a given $\delta>0$, when $R_0$ is large enough depending on $k_0$, the curvature of $\gamma_i$ is small enough and therefore
\[\int_{\gamma_i}e^{-\frac{|x-x_0|^2}{4}}ds\leq 1+\delta.\]
This can be seem by a blow-up argument. In fact as $R_0\to\infty$, $t_0\to 0$, the uniform curvature bound implies that all $\gamma_i$ would converge to a straight line, and the Gaussian integral would converge to $1$.

 The number $m$ is bounded by the multiplicity of the self-intersection points of $\gamma_i$ when $\epsilon_0$ is sufficiently small and $R_0$ sufficiently large. So we conclude that 
\[F_{x_0,t_0}(\Gamma_{\epsilon}^f)\leq CR_0^{-1/2}+m(1+\delta)=m+\delta m+CR_0^{-1/2}.\]
Lemma \ref{Lem:entropy of a self-shrinker is achieved at F01} implies that the entropy of $\Gamma$ is achieved at $F_{0,1}(\Gamma)$, rather than blowing up a self-intersection point. That is, $\lambda(\Gamma)>m+\beta$ for some $\beta>0$. Therefore when $\epsilon_0$ is small, Proposition \ref{prop:continuous when compact} gives that $F_{0,1}(\Gamma^f_\epsilon)>m+\beta/2$. If we pick $\delta$ small enough and $R_0$ large enough, for $t_0\leq R_0^{-3}=:T_0$ we have
\[F_{x_0,t_0}(\Gamma_\epsilon^f)\leq m+\beta/4<m+\beta/2<\lambda(\Gamma).\]
Combining all of the steps above, the proposition follows, with $K:=B_R(0)\times [T_0,T_1]$. 
\end{proof}

 \begin{remark}
We believe that the self-intersection points of an Abresch-Langer curve actually have multiplicity $2$, i.e. an Abresch-Langer curve passes a point in the plane at most twice. It should follow from the fact that the Abresch-Langer curves are all convex.
\end{remark}

\begin{cor}\label{cor:limit of entropy is entropy of limit 1}
Let $\Gamma\subset\R^{2}$ be a closed shrinker and let $(\Gamma_i)_{i\in \N}$ be a sequence of plane curves which $C^\infty$ converges to $\Gamma$. Then 
\[\lim_{i\to \infty}\lambda(\Gamma_i)=\lambda(\Gamma).\]
\end{cor}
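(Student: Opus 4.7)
The plan is to combine the two main results of this section: Theorem~\ref{thm:entropy achieved in compact set} localizes the entropy supremum to a fixed compact set in $(x_0,t_0)$-space, and Proposition~\ref{prop:continuous when compact} then yields uniform convergence of $F$-functionals on that set. Since $\Gamma_i \to \Gamma$ in $C^\infty$, for $i$ large each $\Gamma_i$ is a $C^{2,\alpha}$-small normal graph $\Gamma + f_i \mathbf{n}$ over $\Gamma$, so I set $\epsilon_i := \|f_i\|_{C^{2,\alpha}}$ and $g_i := f_i/\epsilon_i$ (taking $g_i \equiv 0$ if $f_i \equiv 0$); then $\|g_i\|_{C^{2,\alpha}} \leq 1$, $\epsilon_i \to 0$, and $\Gamma_i = \Gamma + \epsilon_i g_i \mathbf{n}$ fits the hypotheses of Theorem~\ref{thm:entropy achieved in compact set}.

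Theorem~\ref{thm:entropy achieved in compact set} then produces $\epsilon_0 > 0$ and a compact $K \subset \R^2 \times (0,\infty)$, depending only on $\Gamma$, such that once $\epsilon_i < \epsilon_0$,
\[
\lambda(\Gamma_i) \;=\; \sup_{(x_0,t_0) \in K} F_{x_0,t_0}(\Gamma_i).
\]
Because $\Gamma$ is a shrinker, \cite[Lemma 7.7]{CoMi} gives $\lambda(\Gamma) = F_{0,1}(\Gamma)$; after enlarging $K$ so that $(0,1) \in K$, the sandwich
\[
\sup_{(x_0,t_0)\in K} F_{x_0,t_0}(\Gamma) \;\leq\; \lambda(\Gamma) \;=\; F_{0,1}(\Gamma) \;\leq\; \sup_{(x_0,t_0)\in K} F_{x_0,t_0}(\Gamma)
\]
gives $\lambda(\Gamma) = \sup_{(x_0,t_0) \in K} F_{x_0,t_0}(\Gamma)$ as well. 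Now Proposition~\ref{prop:continuous when compact} applied with this $K$ supplies a constant $C$ such that for $i$ large, $|F_{x_0,t_0}(\Gamma_i) - F_{x_0,t_0}(\Gamma)| \leq C \epsilon_i$ uniformly in $(x_0,t_0) \in K$. Taking $\sup_{K}$ on both sides and using the elementary inequality $|\sup_K a - \sup_K b| \leq \sup_K |a-b|$ yields $|\lambda(\Gamma_i) - \lambda(\Gamma)| \leq C\epsilon_i \to 0$, which is the desired conclusion.

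The substantive work has already been done in Theorem~\ref{thm:entropy achieved in compact set}, so this corollary is essentially a packaging statement and I do not expect a genuine obstacle. The one subtlety worth highlighting is that the argument relies crucially on the shrinker hypothesis on the limit curve $\Gamma$: without it, the entropy of $\Gamma$ might be attained only in the degenerate limit $t_0 \to 0$ (as in the sphere-rescaling example recalled at the start of this section), in which case no uniform compact $K$ could be extracted and the conclusion would genuinely fail. The shrinker property anchors the supremum at the interior point $(0,1)$ and is what makes the uniform continuity argument go through.
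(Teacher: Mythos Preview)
Your argument is correct and follows essentially the same route as the paper: write $\Gamma_i$ as a small normal graph over $\Gamma$, invoke Theorem~\ref{thm:entropy achieved in compact set} to localize the entropy supremum to a fixed compact $K$, apply Proposition~\ref{prop:continuous when compact} for a uniform $C\epsilon_i$ bound on $|F_{x_0,t_0}(\Gamma_i)-F_{x_0,t_0}(\Gamma)|$ over $K$, and pass to suprema. Your explicit justification that $\lambda(\Gamma)=\sup_K F_{x_0,t_0}(\Gamma)$ via \cite[Lemma 7.7]{CoMi} is a nice touch, though it is already implicit in Theorem~\ref{thm:entropy achieved in compact set} applied with $\epsilon=0$, so the enlargement of $K$ is not strictly needed.
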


\begin{proof}
It follows from the convergence assumption that there exist an $N'\in \N$, such that, for $i>N'$, $\Gamma_i$ can be written as a graph \[\Gamma_i=\Gamma+\epsilon_i f_i \mathbf{n},\] for some $f_i\in C^{\infty}(\Gamma)$ normalized to $\| f_i\|_{C^\infty}= 1$ and some $\epsilon_i>0$ with $\epsilon_i\to 0$ as $i\to \infty$. Applying the previous theorem implies that there exists an $N''\geq N'$ and a compact subset $K\subset \R^2\times (0,\infty)$ such that, for all $i>N''$, the entropy $\lambda(\Gamma_i)$ is achieved at some $(x_i,t_i)\in K$. Then by Proposition \ref{prop:continuous when compact}, there exist an $N\geq N''$ and a constant $C$ depending only on $K$ such that, for any $(x_0,t_0)\in K$ and any $i>N$, we have 
\[F_{x_0,t_0}(\Gamma_i)-C\epsilon_i\leq F_{x_0,t_0}(\Gamma)\leq F_{x_0,t_0}(\Gamma_i)+C\epsilon_i.\]
Consequently, \[\sup_{K}F_{x_0,t_0}(\Gamma_i)-C\epsilon_i\leq \sup_{K}F_{x_0,t_0}(\Gamma)\leq \sup_{K} F_{x_0,t_0}(\Gamma_i)+C\epsilon_i.\] Since the entropy $\lambda(\Gamma_i)$ is achieved in $K$ for all $i>N$, it follows immediately that 
\[\lambda(\Gamma_i)-C\epsilon_i\leq \lambda(\Gamma)\leq \lambda(\Gamma_i)+C\epsilon_i.\] Now taking the limit $i\to \infty$ gives the desired result.
\end{proof}

\paragraph{}
The following corollary will be central to our construction of a generic CSF. The previous corollary shows that the entropy is continuous {\it at} a closed plane shrinker in the space of immersed curves. However, for the construction of the piecewise CSF in Theorem \bref{thm:PCSF}{B}, we also need the entropy to be continuous {\it near} a closed plane shrinker in the space of immersed curves. The following corollary ensures that the latter statement is satisfied.

\begin{cor}\label{cor:continuity of entropy near a shrinker}
Let $\Gamma\subset \R^2$ be a closed shrinker, $f\in C^{2,\alpha}(\Gamma)$ be a variation, and $(g_i)_{i\in \N}\subset C^{\infty}(\Gamma)$ be a sequence of functions which $C^{\infty}$-converges to $0$. Then there exists an $\epsilon_0>0$ such that, for all $\epsilon\in \R$ with $|\epsilon|<\epsilon_0$, we have \[\lim_{i\to \infty}\lambda(\Gamma_{\epsilon}+g_i\mathbf{n})=\lambda(\Gamma_{\epsilon}),\] where $\Gamma_{\epsilon}=\Gamma+\epsilon f\mathbf{n}$.
\end{cor}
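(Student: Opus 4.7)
The plan is to combine Theorem \ref{thm:entropy achieved in compact set} with Proposition \ref{prop:continuous when compact}. The former will supply a single compact set on which all relevant entropies are attained, and the latter will supply quantitative continuity of the $F$-functional on that set. The main obstacle is that Theorem \ref{thm:entropy achieved in compact set} applies only to perturbations of a \emph{shrinker}, whereas $\Gamma_\epsilon$ itself is not a shrinker. I sidestep this by treating $\epsilon f + g_i$ as a single small perturbation of the underlying shrinker $\Gamma$ and invoking the Theorem there.

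First I identify
\[\Gamma_\epsilon + g_i \mathbf{n} = \Gamma + (\epsilon f + g_i)\mathbf{n}\]
as a normal graph over $\Gamma$. Letting $\epsilon_0'>0$ denote the constant supplied by Theorem \ref{thm:entropy achieved in compact set} applied to $\Gamma$, set
\[\epsilon_0 := \frac{\epsilon_0'}{2\|f\|_{C^{2,\alpha}}}.\]
Then for every $|\epsilon|<\epsilon_0$ and every $i$ large enough that $\|g_i\|_{C^{2,\alpha}}<\epsilon_0'/2$, the total height satisfies $\|\epsilon f + g_i\|_{C^{2,\alpha}}<\epsilon_0'$. Theorem \ref{thm:entropy achieved in compact set} therefore yields a compact set $K\subset \R^2\times(0,\infty)$, depending only on $\Gamma$, such that
\[\lambda(\Gamma_\epsilon + g_i\mathbf{n}) = \sup_{(x_0,t_0)\in K} F_{x_0,t_0}(\Gamma_\epsilon + g_i\mathbf{n}) \quad\text{and}\quad \lambda(\Gamma_\epsilon) = \sup_{(x_0,t_0)\in K} F_{x_0,t_0}(\Gamma_\epsilon)\]
for all such $\epsilon$ and $i$. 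The same $K$ works for every such pair $(\epsilon, i)$.

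Now fix $\epsilon \in (-\epsilon_0, \epsilon_0)$. Since the $F$-functional depends only on the image of the curve (with multiplicity), I may reparametrize $\Gamma_\epsilon + g_i\mathbf{n}$ as a normal graph $\Gamma_\epsilon + \tilde g_i \mathbf{n}_\epsilon$ over $\Gamma_\epsilon$, using the unit normal $\mathbf{n}_\epsilon$ of $\Gamma_\epsilon$; the implicit function theorem then shows $\|\tilde g_i\|_{C^{2,\alpha}}\to 0$ as $i\to\infty$. Applying Proposition \ref{prop:continuous when compact} with base hypersurface $\Gamma_\epsilon$, perturbation $\tilde g_i$, and the compact set $K$ produced above gives a constant $C = C(\Gamma_\epsilon, K)$ with
\[|F_{x_0,t_0}(\Gamma_\epsilon + g_i\mathbf{n}) - F_{x_0,t_0}(\Gamma_\epsilon)| \le C \|\tilde g_i\|_{C^{2,\alpha}}\]
for all $(x_0,t_0)\in K$ and all large $i$. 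Taking the supremum over $K$ on both sides and invoking the attainment conclusion above yields
\[|\lambda(\Gamma_\epsilon + g_i\mathbf{n}) - \lambda(\Gamma_\epsilon)| \le C \|\tilde g_i\|_{C^{2,\alpha}} \to 0\]
as $i\to\infty$, which is the desired statement.
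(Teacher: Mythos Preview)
Your proof is correct and follows the same two-step strategy as the paper: apply Theorem \ref{thm:entropy achieved in compact set} to the shrinker $\Gamma$ with the combined perturbation $\epsilon f + g_i$ to confine all relevant entropies to a single compact set $K$, and then invoke Proposition \ref{prop:continuous when compact} for continuity of the $F$-functional on $K$. Your explicit reparametrization of $\Gamma_\epsilon + g_i\mathbf{n}$ as a normal graph $\Gamma_\epsilon + \tilde g_i\mathbf{n}_\epsilon$ over $\Gamma_\epsilon$ is a careful way to make Proposition \ref{prop:continuous when compact} directly applicable with base hypersurface $\Gamma_\epsilon$; the paper's invocation of that proposition at this step is terser and leaves this detail implicit.
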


\begin{proof}
By the $C^{\infty}$ convergence assumption, there exists $N>0$ such that $\|g_i\|_{C^{2,\alpha}}\leq 1/2$ for all $i>N$. Since $\Gamma$ is compact and $f\in C^{2,\alpha}(\Gamma)$, there exists $\epsilon_1>0$ such that $\|\epsilon f\|_{C^{2,\alpha}}\leq 1/2$ for all $\epsilon \in \R$ with $|\epsilon| <\epsilon_1$. Consequently, $\|\epsilon f+g_i\|_{C^{2,\alpha}}\leq 1$ for all $i>N$ and all $\epsilon \in \R$ with $|\epsilon|<\epsilon_1$. By Theorem \ref{thm:entropy achieved in compact set}, there exists an $\epsilon_2>0$ with $\epsilon_2\leq \epsilon_1$, an $N'\geq N$, and a compact subset $K\subset \R^2\times (0,\infty)$ such that, for all $\epsilon \in \R$ with $|\epsilon|<\epsilon_2$ and all $i>N'$, the entropy of the curves $\Gamma_{\epsilon}$ and $\Gamma_{\epsilon}+g_i\mathbf{n}$ is attained in $K$. By Proposition \ref{prop:continuous when compact}, for all $\epsilon\in \R$ with $|\epsilon|<\epsilon_0$ and all $(x_0,t_0)\in K$, we have \[\lim_{i\to \infty}F_{x_0,t_0}(\Gamma_{\epsilon}+g_i\mathbf{n})=F_{x_0,t_0}(\Gamma_{\epsilon}).\] Since the curves $\Gamma_{\epsilon}$ and $\Gamma_{\epsilon}+g_i\mathbf{n}$ attain their entropies in $K$ for all $\epsilon \in \R$ with $|\epsilon|<\epsilon_0$ and all $i>N'$, arguing as in the proof of the previous corollary concludes the proof.
\end{proof}

\section{Entropy and Turning Number}\label{S:Entropy and Turning Number}

\paragraph{}
In this section we will prove Theorem \ref{thm:main theorem type I singularity then entropy has lower bound}, which gives lower bounds for the entropy of closed plane curves which generate type I singularities under the CSF. We will first prove a particular case, in which we only consider the closed plane shrinkers themselves.

\begin{thm}\label{thm:main theorem A-L curve lower bound}
Let $\Gamma\subset \R^2$ be a closed shrinker with turning number $m$. Then $\lambda(\Gamma)\geq\lambda(\Gamma_m).$
\end{thm}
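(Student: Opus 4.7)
The plan is to split into two cases based on whether $\Gamma$ is the $m$-covered circle $\Gamma_m$ itself or a nontrivial Abresch--Langer curve $\Gamma_{m,n}$. In the first case the inequality is trivial, so all the work lies in the second case, where I will use Au's convergence result (already cited in the outline) together with monotonicity of entropy along the rescaled CSF and the continuity of entropy at closed shrinkers established in \aref{cor:limit of entropy is entropy of limit 1}.

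Concretely, suppose $\Gamma = \Gamma_{m,n}$ and consider the outward normal perturbation $\Gamma_\epsilon := \Gamma + \epsilon\, \mathbf{n}$ for small $\epsilon>0$. By Au's theorem, under the rescaled CSF starting at $\Gamma_\epsilon$ the curve converges smoothly as $t\to\infty$ to the $m$-covered circle $\Gamma_m$. Let $\{\tilde\Gamma_\epsilon^t\}_{t\geq 0}$ denote the rescaled CSF. Since entropy is scale- and translation-invariant and is nonincreasing along MCF, it is also nonincreasing along the rescaled CSF, so
\begin{equation*}
\lambda(\tilde\Gamma_\epsilon^t) \leq \lambda(\Gamma_\epsilon) \qquad \text{for all } t\geq 0.
\end{equation*}
Because $\tilde\Gamma_\epsilon^t$ converges smoothly to the closed shrinker $\Gamma_m$, \aref{cor:limit of entropy is entropy of limit 1} applies at $\Gamma_m$, giving $\lim_{t\to\infty}\lambda(\tilde\Gamma_\epsilon^t) = \lambda(\Gamma_m)$. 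Passing to the limit in the displayed inequality yields
\begin{equation*}
\lambda(\Gamma_m) \leq \lambda(\Gamma_\epsilon).
\end{equation*}

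Now I let $\epsilon\to 0$. The curves $\Gamma_\epsilon$ converge smoothly to the closed shrinker $\Gamma$, so applying \aref{cor:limit of entropy is entropy of limit 1} a second time, this time at $\Gamma$, gives $\lim_{\epsilon\to 0}\lambda(\Gamma_\epsilon) = \lambda(\Gamma)$. Combining the two inequalities yields $\lambda(\Gamma_m)\leq \lambda(\Gamma)$, as required.

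The main obstacle is the continuity of entropy at closed shrinkers used twice above: entropy is not continuous on the space of immersed hypersurfaces in general (as the sphere-to-hyperplane blow-up example from Section~\ref{S:Properties of Entropy} shows), so we must rely precisely on the delicate continuity result \aref{cor:limit of entropy is entropy of limit 1}, which was the technical heart of the previous section. A secondary point is that Au's result only directly gives convergence for the outward perturbation $\Gamma + \epsilon\mathbf{n}$ with $\epsilon>0$, but this is exactly the family we need, and both continuity applications work for one-sided limits $\epsilon\downarrow 0$. No further analysis is needed: the theorem follows by stringing together Au's convergence, entropy monotonicity along rescaled CSF, and the two applications of entropy continuity at shrinkers.
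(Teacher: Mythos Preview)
Your argument is essentially the paper's Case~(1) argument---Au's convergence result combined with entropy monotonicity and \aref{cor:limit of entropy is entropy of limit 1}---and it is correct for the primitive Abresch--Langer curves $\Gamma_{m,n}$ with $\gcd(m,n)=1$. The only substantive difference in presentation is that you invoke \aref{cor:limit of entropy is entropy of limit 1} twice (once at $\Gamma_m$, once at $\Gamma$), whereas the paper gets $\lambda(\Gamma_m)\le\lambda(\Gamma_\epsilon)$ directly from monotonicity (since $\lambda(\Gamma_m)=F_{0,1}(\Gamma_m)$ for a shrinker, and Huisken's monotonicity bounds this by the initial entropy). Either route is fine.

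However, your case split has a genuine gap: a closed shrinker of turning number $m$ need not be either $\Gamma_m$ or a primitive $\Gamma_{m,n}$. It could be a $k$-fold cover of some primitive $\Gamma_{p,q}$ with $kp=m$ and $k\ge 2$. Au's theorem, as stated and as used in the paper, applies to curves whose immersion $x:S^1\to x(S^1)$ has degree~$1$, so you cannot invoke it directly on such a multiply-covered curve. The paper handles this separately as Case~(2): if $\Gamma$ is a $k$-cover of $\Gamma'$ with $\Gamma'$ of turning number $p$, then using Case~(1) on $\Gamma'$ and the multiplicativity~(\ref{eq:entropy of m-covered circle}) one gets
\[
\lambda(\Gamma)=k\,\lambda(\Gamma')\ge k\,\lambda(\Gamma_p)=kp\,\lambda(\Gamma_1)=\lambda(\Gamma_m).
\]
Alternatively, you could patch your argument by observing that CSF (and hence rescaled CSF) commutes with finite covers, so Au's convergence lifts: the $k$-covered perturbation flows to the $k$-covered $\Gamma_p=\Gamma_{kp}=\Gamma_m$. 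Either way, the multiply-covered case needs to be addressed explicitly.
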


The proof is based on a result by Au \cite{Au} together with the idea from \cite{CIMW}. Let us first recall the Theorem proved by Au.

\begin{thm}[Main Theorem of \cite{Au}, part (a)]
Let $\Gamma_{m,n}$ be an Abresch-Langer curve with turning number $m$ and $\gamma_0=\Gamma+\epsilon\mathbf{n}$ be a perturbation with $|\epsilon|$ small. Then if $\epsilon>0$, the curve shortening flow $\gamma_t$ starting from $\gamma_0$ is asymptotic to an $m$-covered circle.
\end{thm}

Now we give the proof of Theorem \ref{thm:main theorem A-L curve lower bound}.

\begin{proof}[Proof of Theorem \ref{thm:main theorem A-L curve lower bound}]
We assume $\Gamma\neq \Gamma_m$, since otherwise the result holds trivially. Let $x:S^1\to \R^2$ be the given immersion of $\Gamma$. There are two cases to consider.

{\it Case (1):} Suppose that the degree of the map $x:S^1\to x(S^1)$ is 1 (in other words, $\Gamma$ is not multiply-covered). Define the constant normal variation function $f=1$ along $\Gamma$. By Corollary \ref{cor:limit of entropy is entropy of limit 1}, if $\Gamma_{\epsilon}=\Gamma+\epsilon \mathbf{n}$, then \[\lim_{\epsilon\to 0}\lambda(\Gamma_{\epsilon})=\lambda(\Gamma).\] In particular, there exists a constant $C>0$ such that \[\lambda(\Gamma)-C\epsilon\leq\lambda(\Gamma_{\epsilon})\leq \lambda(\Gamma)+C\epsilon,\] for all $\epsilon>0$ small enough. Au \cite{Au} showed that the rescaled CSF starting at $\Gamma_{\epsilon}$ converges to the the $m$-covered circle. By the monotonicity property of entropy, we therefore have \[\lambda(\Gamma_m)\leq \lambda(\Gamma_{\epsilon})\leq \lambda(\Gamma)+C\epsilon.\] Taking the limit $\epsilon \to 0$ shows that $\lambda(\Gamma)\geq \lambda(\Gamma_m)$. 

{\it Case (2):} Suppose that the degree of the map $x:S^1\to x(S^1)$ is greater than 1 (in other words, $\Gamma$ is a multiply-covered Abresch-Langer curve). Then $\Gamma$ is a $k$-covered Abresch-Langer curve (with $k\geq 2$) $\Gamma'$, where $\Gamma'$ has turning number $p$ satisfying $kp=m$. By the previous case and (\ref{eq:entropy of m-covered circle}), \[ \lambda(\Gamma)=k\cdot\lambda(\Gamma')\geq k\cdot\lambda(\Gamma_p)=kp\cdot\lambda(\Gamma_1)=m\cdot\lambda(\Gamma_1)=\lambda(\Gamma_m).\]
\end{proof}

\begin{lemma}
The turning number of a closed immersed curve $\Gamma\subset \R^2$ is preserved under the CSF and rescaled CSF.
\end{lemma}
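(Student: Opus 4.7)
The plan is to use the direct evolution-of-curvature computation rather than invoking Whitney--Graustein, because this gives a cleaner quantitative statement and handles the rescaled flow in the same stroke. Recall that the turning number of a closed immersed plane curve $\Gamma$ is
\[
m=\frac{1}{2\pi}\int_{\Gamma}k\,ds,
\]
so it suffices to show that the total curvature $\int k\,ds$ is constant along the flow on any time interval on which the flow remains a smooth immersion.

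For the CSF, parameterize the flow by $x:S^{1}\times[0,T)\to\R^{2}$ with $\partial_{t}x=k\mathbf{n}$. I would record the two standard evolution equations (see, e.g., Gage--Hamilton):
\[
\partial_{t}(ds)=-k^{2}\,ds,\qquad \partial_{t}k=k_{ss}+k^{3}.
\]
Then
\[
\frac{d}{dt}\int_{\Gamma_{t}}k\,ds=\int_{\Gamma_{t}}\bigl(k_{ss}+k^{3}\bigr)\,ds+\int_{\Gamma_{t}}k\cdot(-k^{2})\,ds=\int_{\Gamma_{t}}k_{ss}\,ds=0,
\]
because $\Gamma_{t}$ is closed and so $\int k_{ss}\,ds$ vanishes by the fundamental theorem of calculus on $S^{1}$. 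Hence the turning number is constant in $t$.

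For the rescaled CSF, I would argue by invariance under space--time rescaling. If $\tilde{\Gamma}_{\tau}=\mu(\tau)\,\Gamma_{t(\tau)}$ for some smooth positive function $\mu$ and reparameterization $t(\tau)$, then the arclength rescales by $d\tilde{s}=\mu\,ds$ and the curvature by $\tilde{k}=k/\mu$, so
\[
\int_{\tilde{\Gamma}_{\tau}}\tilde{k}\,d\tilde{s}=\int_{\Gamma_{t(\tau)}}k\,ds,
\]
which is constant by the CSF computation above. Alternatively, one can simply note that the turning number is an integer-valued, continuous functional on the space of $C^{1}$ immersions of $S^{1}$, and both the CSF and the rescaled CSF produce a continuous family of smooth immersions on each smooth existence interval, so the integer $\frac{1}{2\pi}\int k\,ds$ cannot jump.

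There is no real obstacle here; the only point that requires any care is the boundary term $\int k_{ss}\,ds$, which is handled by the closedness of $\Gamma_{t}$ together with the fact that $k_{s}$ is a well-defined $C^{0}$ function on $S^{1}$ (so its integral over the closed curve is zero). Everything else is a routine verification using the standard CSF evolution equations.
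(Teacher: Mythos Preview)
Your proof is correct, but the paper takes a different and more topological route: it simply observes that CSF and rescaled CSF preserve immersedness and hence give a regular homotopy, and then invokes the Whitney--Graustein theorem, which classifies regular homotopy classes of immersions $S^{1}\to\R^{2}$ by turning number. Your primary argument is a direct computation with the evolution equations $\partial_{t}k=k_{ss}+k^{3}$ and $\partial_{t}(ds)=-k^{2}\,ds$, which is more quantitative and self-contained (no outside theorem needed), while the paper's argument is softer and requires no computation at all. Interestingly, the ``alternative'' you mention at the end---that the turning number is an integer-valued continuous functional on the space of $C^{1}$ immersions and therefore cannot jump along a continuous family---is essentially the paper's argument without explicitly naming Whitney--Graustein.
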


\begin{proof}
CSF and rescaled CSF preserve immersedness and are therefore regular homotopies. Regular homotopy classes of immersions $S^1\to \R^2$ are classified by their turning number, by the Whitney-Graustein theorem \cite[Theorem 1]{Wh}. In other words, two closed immersed planar curves are regularly homotopic if and only if they have the same turning number.
\end{proof}

\begin{proof}[Proof of Theorem \ref{thm:main theorem type I singularity then entropy has lower bound}]
We assume that $\Gamma$ is not a shrinker, since otherwise the theorem follows immediately from Theorem \ref{thm:main theorem A-L curve lower bound}. Let $(\Gamma_t)_{t\in [0,T)}$ be the CSF starting at $\Gamma$. Since all type I singularities are closed shrinkers, they are compact, so there exists a unique singular point $x_0\in \R^2$. Without loss of generality, we may assume $x_0$ is the origin. Let $(\tilde{\Gamma}_{\tau})_{\tau\in [\log T^{-1/2},\infty)}$ be a rescaled CSF \cite[Section 2]{Hu} around $0\in \R^2$ starting at $\Gamma$. By \cite[Theorem 3.5]{Hu}, for any sequence $\tau_i\to \infty$, there exists a subsequence also denoted $\tau_i$ such that the rescaled curves $\tilde{\Gamma}_{\tau_i}$ will smoothly converge to a shrinker $\tilde{\Gamma}_{\infty}$ as $\tau_i\to \infty$. By the classification of $1$-dimensional shrinkers \cite{AbLa}, $\tilde{\Gamma}_{\infty}$ must be a multiply-covered circle or a multiply-covered Abresch-Langer curve. 

Let $m$ be the turning number of $\tilde{\Gamma}_{\infty}$. By the lemma, the turning number of $\tilde{\Gamma}_{\tau}$ is $m$ for all $\tau\in [\log T^{-1/2},\infty)$. By the $C^{\infty}$-convergence $\tilde{\Gamma}_{\tau_i}\to\tilde{\Gamma}_{\infty}$, the curvature of $\tilde{\Gamma}_{\tau_i}$ smoothly converges to the curvature of $\tilde{\Gamma}_{\infty}$. Therefore the total curvatures satisfy \[n=\lim_{i\to \infty}\frac{1}{2\pi}\int_{\tilde{\Gamma}_{\tau_i}}\tilde{k}_i=m.\] This shows that $\tilde{\Gamma}_{\infty}$ is a shrinker of turning number $m$. By the monotonicity of entropy, \[\lambda(\tilde{\Gamma}_{\tau})\leq \lambda(\Gamma)\] for all $\tau\in[\log T^{-1/2},\infty)$. By the $C^{\infty}$-convergence $\tilde{\Gamma}_{\tau_i}\to\tilde{\Gamma}_{\infty}$, Corollary \ref{cor:limit of entropy is entropy of limit 1} implies that \[\lim_{i\to \infty}\lambda(\tilde{\Gamma}_{\tau_i})=\lambda(\tilde{\Gamma}_{\infty}),\]
and since this holds for any blowup sequence, it follows thus that
\[\lambda(\Gamma)\geq\lambda(\tilde{\Gamma}_{\infty}).\]
Since we assume that $\Gamma$ is not a shrinker and since entropy is a constant along the CSF only for shrinkers, the inequality is in fact strict. The theorem now follows from Theorem \ref{thm:main theorem A-L curve lower bound}.
\end{proof}

\section{Entropy Index, $F$-Index and Morse Index}\label{S:Entropy Index and Morse Index}

\paragraph{}
Recall that the shrinkers $\Sigma\subset \R^{n+1}$ are precisely the minimal surfaces of $\R^{n+1}$ equipped with the conformally changed metric $g_{ij}=e^{-\frac{|x|^2}{2n}}\delta_{ij}$, and are critical points of the Gaussian area functional $F=F_{0,1}$. In this section, we will determine the relationship between three types of stability of a shrinker: entropy stability, $F$-stability, and Morse stability (when a shrinker is considered as a critical point of $F=F_{0,1}$). Theorem \ref{thm:equivalence of F and Morse index}, shows that a closed shrinker $\Sigma\subset \R^{n+1}$ is entropy unstable if the Morse index of $\Sigma$ is greater than $n+2$. Consequently, to prove Theorem \ref{thm:main theorem existence of perturbation reduce the entropy} we will only need to show that the $m$-covered circle has Morse index greater than 3. The results of this section apply to closed shrinkers $\Sigma^n\subset \R^{n+1}$ of any dimension.

Entropy is hard to compute. Therefore it is not easy to characterize the stability of entropy by variational methods. In order to overcome this issue, Colding-Minicozzi \cite{CoMi} introduced the notion of $F$-stability, and study the connections between $F$-stability and entropy stability. Later Liu \cite{Li} studied further properties of these connections. Let us recall some definitions in \cite{CoMi} and \cite{Li}. For the purpose of this paper, we will only discuss the case of closed shrinkers.

\begin{defn}
[Stability, see \cite{CoMi}]
Let $\Sigma^n\subset \R^{n+1}$ be a closed shrinker and $\Sigma_{\epsilon}$ a variation of $\Sigma$ in the direction $f\in C^{2,\alpha}(\Sigma)$.
\begin{description}
\item[(i)]
The variation $\Sigma_{\epsilon}$ is {\it entropy unstable} if $\lambda(\Sigma_\epsilon)< \lambda(\Sigma)$ for all sufficiently small $\epsilon\neq 0$; otherwise, the variation is {\it entropy stable}.
\item[(ii)]
The variation $\Sigma_{\epsilon}$ is {\it $F$-stable} if there exist some variations $x_\epsilon$ of $0$ and $t_\epsilon$ of $1$ such that $$ \frac{\partial^2}{\partial\epsilon^2}\Big|_{\epsilon=0}F_{x_\epsilon,t_\epsilon}(\Sigma_\epsilon)\geq 0;$$ otherwise, the variation is {\it $F$-unstable}.
\item[(iii)]
The variation $\Sigma_{\epsilon}$ is {\it Morse stable} if 
$$
\frac{\partial^2}{\partial \epsilon^2}\Big|_{\epsilon=0}F(\Sigma_{\epsilon})\geq 0;
$$
otherwise, the variation is {\it Morse unstable}.
\end{description}
\end{defn}

\begin{defn}
[Index, see \cite{Li}]
Let $\Sigma^n\subset \R^{n+1}$ be a closed shrinker. 
\begin{description}
\item[(i)]
The {\it entropy index} of $\Sigma$, denoted by $\ind_{\lambda}(\Sigma)$, is the maximum dimension of all subspaces $V\subset C^{2,\alpha}(\Sigma)$ such that, for any nonzero $g\in V$, the variation $\Sigma_{\epsilon}$ in the direction of $g$ is entropy unstable. The shrinker $\Sigma$ is {\it entropy stable} if its entropy index is zero; otherwise, $\Sigma$ is {\it entropy unstable}.
\item[(ii)]
The {\it $F$-index} of $\Sigma$, denoted by $\ind_{F}(\Sigma)$, is the maximum dimension of all subspaces $V\subset C^{2,\alpha}(\Sigma)$ such that, for any nonzero $g\in V$, the variation $\Sigma_{\epsilon}$ in the direction of $g$ is $F$-unstable. The shrinker $\Sigma$ is {\it $F$-stable} if its $F$-index is zero; otherwise, $\Sigma$ is {\it $F$-unstable}.
\item[(iii)]
The {\it Morse index} of $\Sigma$, denoted by $\ind_{M}(\Sigma)$, is the maximum dimension of all subspaces $V\subset C^{2,\alpha}(\Sigma)$ such that, for any nonzero $g\in V$, the variation $\Sigma_{\epsilon}$ in the direction of $g$ is Morse unstable. The shrinker $\Sigma$ is {\it Morse stable} if its Morse index is zero; otherwise, $\Sigma$ is {\it Morse unstable}.
\end{description}
\end{defn}

\begin{remark}
In the definition of the indexes, the variations are assumed to be $C^{2,\alpha}$. However, from standard elliptic theory (also see the discussion after (\ref{Eq:L})), one can define the indexes as the dimension of unstable variations with other regularities, such as $W^{1,2}$, and the indexes would be the same.
\end{remark}
\paragraph{}
In \cite[Theorem 0.15]{CoMi}, Colding-Minicozzi showed that an $F$-unstable self-shrinker must be entropy unstable, which provides the link between entropy and the $F$-functional. Their theorem was stated for embedded shrinkers, but their proof also works for closed immersed shrinkers. 

\begin{thm}[\cite{CoMi}]\label{thm:F-unstable implies entropy unstable}
Let $\Sigma^n\subset \R^{n+1}$ be a closed shrinker. Then the entropy index of $\Sigma$ is bounded below by the $F$-index of $\Sigma$, i.e.
\be
\ind_{\lambda}(\Sigma)\geq \ind_{F}(\Sigma).
\ee
\end{thm}

This theorem is just a re-formulation of Colding-Minicozzi's \cite[Theorem 0.15]{CoMi}. For a proof, we refer the readers to the proof of \cite[Theorem 0.15]{CoMi} (see \cite[page 789]{CoMi}).

\paragraph{}
In \cite{CoMi}, Colding-Minicozzi introduced the elliptic operator $L$, defined by
\begin{equation}\label{Eq:L}
L=\Delta+|A|^2-\frac{1}{2}\langle x,\nabla(\cdot)\rangle+\frac{1}{2},
\end{equation}
see \cite[(4.13)]{CoMi}. This operator is the linearized operator of a self-shrinker, and it is also the stability operator for the Gaussian area functional. Therefore, the Morse index of a self-shrinker can be also defined to be the number (count multiplicity) of negative eigenvalues of the operator $L$.

We point out that there are two eigenfunctions to the operator $L$ on a self-shrinker. It was proved by Colding-Minicozzi in \cite[Lemma 5.5]{CoMi} that
\begin{equation}\label{Eq:L eigenfunction}
LH=H,\qquad L\langle \mathbf{n},v\rangle=	\frac{1}{2}\langle \mathbf{n},v\rangle,
\end{equation}
where $v$ is a constant vector field in $\R^{n+1}$.

\paragraph{}
We will use two lemmas in the proof of Theorem \ref{thm:equivalence of F and Morse index}:

\begin{lemma}\label{lem:curvature and components of normal are stable}
Let $\Sigma\subset \R^{n+1}$ be a closed shrinker and let $f\in C^{2,\alpha}(\Sigma)$ be contained in the subspace of variations spanned by the mean curvature $H$ and the components $\mathbf{n}_1,\dots,\mathbf{n}_{n+1}$ of the normal vector field $\mathbf{n}:\Sigma\to \R$. Then $f$ is an $F$-stable variation.
\end{lemma}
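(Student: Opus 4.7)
The plan is to exploit the invariance of the $F$-functional under ambient translations and dilations of $\R^{n+1}$. Write $f = aH + \sum_{i=1}^{n+1} b_i \mathbf{n}_i$ with $a, b_i \in \R$. Since $\Sigma$ is a shrinker, the shrinker equation $H = \tfrac{1}{2}\langle x, \mathbf{n}\rangle$ gives $f = \langle (a/2)\, x + b, \mathbf{n}\rangle$ with $b = (b_1, \ldots, b_{n+1})$, so $f$ is precisely the normal component, along $\Sigma$, of the infinitesimal generator of the one-parameter family of affine maps $\phi_\epsilon(y) := (1 + a\epsilon/2)\, y + \epsilon b$ of $\R^{n+1}$. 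The key idea is to compare the normal variation $\Sigma_\epsilon := \Sigma + \epsilon f \mathbf{n}$ with the group-orbit family $\tilde{\Sigma}_\epsilon := \phi_\epsilon(\Sigma) = \lambda(\epsilon) \Sigma + v(\epsilon)$, where $\lambda(\epsilon) = 1 + a\epsilon/2$ and $v(\epsilon) = \epsilon b$.

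For the group-orbit family, a direct change of variables yields the exact identity
\[ F_{x_0,t_0}(\lambda \Sigma + v) = F_{(x_0 - v)/\lambda,\, t_0/\lambda^2}(\Sigma). \]
Setting $x_\epsilon := v(\epsilon) = \epsilon b$ and $t_\epsilon := \lambda(\epsilon)^2 = (1 + a\epsilon/2)^2$, both smooth variations through $(0,1)$, therefore gives $F_{x_\epsilon, t_\epsilon}(\tilde{\Sigma}_\epsilon) \equiv F_{0,1}(\Sigma)$ identically in $\epsilon$. In particular, $\tfrac{d^2}{d\epsilon^2}\big|_{\epsilon=0} F_{x_\epsilon, t_\epsilon}(\tilde{\Sigma}_\epsilon) = 0$, which establishes entropy stability for this specific comparison family.

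The final step is to transfer this conclusion to the actual normal variation $\Sigma_\epsilon$ using the same curve $(x_\epsilon, t_\epsilon)$. By construction, $\Sigma_\epsilon$ and $\tilde{\Sigma}_\epsilon$ start at $\Sigma$ with identical first-order normal variation $f$; as surfaces in $\R^{n+1}$ they differ only at order $\epsilon^2$. The crucial observation is that $(\Sigma, 0, 1)$ is a critical point of the joint functional $(M, x_0, t_0) \mapsto F_{x_0, t_0}(M)$: the shrinker equation makes the first variation in the surface direction vanish (since Huisken's first variation formula integrates $g \cdot (H - \tfrac{1}{2}\langle x, \mathbf{n}\rangle) e^{-|x|^2/4}$), while \cite[Lemma 7.7]{CoMi} identifies $(0, 1)$ as the interior maximum of $(x_0, t_0) \mapsto F_{x_0, t_0}(\Sigma)$. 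A standard Taylor expansion argument then shows that the second derivative of $F$ along any smooth curve through $(\Sigma, 0, 1)$ depends only on first-order data, yielding $\tfrac{d^2}{d\epsilon^2}\big|_{\epsilon=0} F_{x_\epsilon, t_\epsilon}(\Sigma_\epsilon) = \tfrac{d^2}{d\epsilon^2}\big|_{\epsilon=0} F_{x_\epsilon, t_\epsilon}(\tilde{\Sigma}_\epsilon) = 0 \geq 0$, so $f$ is stable.

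The main technical obstacle is making this Hessian-at-critical-point argument rigorous in the infinite-dimensional setting of surface variations, where one must also confirm that the first-order tangential discrepancy between the two variations along $\Sigma$ contributes nothing to the second derivative of $F$. I would handle this by writing both $\Sigma_\epsilon$ and $\tilde{\Sigma}_\epsilon$ as normal graphs over $\Sigma$ whose graph functions $u^{(1)}_\epsilon = \epsilon f$ and $u^{(2)}_\epsilon = \epsilon f + O(\epsilon^2)$ agree to first order, then directly Taylor-expanding $F$ in $\epsilon$ and verifying that the $O(\epsilon^2)$ discrepancies coming from the graph function, the translation, and the dilation are all absorbed into integrals against $H - \tfrac{1}{2}\langle x, \mathbf{n}\rangle$ or into gradients of $F$ in $(x_0, t_0)$ evaluated at the critical point $(\Sigma, 0, 1)$, and hence vanish.
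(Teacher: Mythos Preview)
Your approach is correct and genuinely different from the paper's. The paper proceeds by direct computation: writing $f = aH + \langle Y,\mathbf{n}\rangle$, taking \emph{linear} variations $x_\epsilon = \epsilon y$, $t_\epsilon = 1 + \epsilon h$, and plugging into the explicit second-variation formula from \cite[Section~4]{CoMi}. Using the eigenfunction identities $LH=H$, $L\langle Y,\mathbf{n}\rangle=\tfrac12\langle Y,\mathbf{n}\rangle$ and the weighted-$L^2$ orthogonality of $H$ and $\langle Y,\mathbf{n}\rangle$, the integrand collapses to $-(a-h)^2H^2 - \tfrac12(\langle Y,\mathbf{n}\rangle-\langle y,\mathbf{n}\rangle)^2$, so choosing $h=a$, $y=Y$ gives $F''=0$. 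Your route instead identifies $f$ as the normal part of the generator of the affine family $\phi_\epsilon$, uses the scaling/translation covariance of $F$ to make $F_{x_\epsilon,t_\epsilon}(\phi_\epsilon(\Sigma))$ \emph{identically} constant, and then invokes joint criticality of $(\Sigma,0,1)$ to transfer the second derivative to the genuine normal variation $\Sigma_\epsilon$. The paper's computation is shorter once the second-variation formula is in hand and yields the bonus that $F''\le 0$ for \emph{all} $(h,y)$, not just the optimal choice; your argument is more conceptual, explains \emph{why} the result holds (group invariance at a critical point forces orbit directions into the Hessian kernel), and avoids both the eigenfunction identities and the orthogonality of $H$ with the $\mathbf{n}_i$. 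The technical step you flag---writing $\phi_\epsilon(\Sigma)$ as a normal graph $\Sigma + (\epsilon f + O(\epsilon^2))\mathbf{n}$ and checking that the $O(\epsilon^2)$ discrepancy pairs against the vanishing first variation---is routine since $F$ is a smooth integral functional of the immersion. One small correction: you don't need \cite[Lemma~7.7]{CoMi} (which concerns where the entropy is achieved) for criticality in $(x_0,t_0)$; vanishing of $\partial_{x_0}F$ and $\partial_{t_0}F$ at $(0,1)$ follows directly from the first-variation identities in \cite[Section~4]{CoMi} together with the shrinker equation.
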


\begin{proof}
We refer the reader to \cite[Section 4]{CoMi} for properties and notations related to the second variation of the $F$-functional. Let $f=aH+\langle Y,\mathbf{n}\rangle$ for some $a\in \R$ and some $Y\in \R^{n+1}$. Choose any $y\in \R^{n+1}$ and any $h\in \R$ and set $x_{\epsilon}=\epsilon y$, $t_{\epsilon}=1+\epsilon h$ and $\Sigma_{\epsilon}=\Sigma+\epsilon f\mathbf{n}$. Then  
\begin{align*}
\left.\frac{\partial^2}{\partial \epsilon^2}\right|_{\epsilon=0}F_{x_{\epsilon},t_{\epsilon}}(\Sigma_{\epsilon})
& =\int_{\Sigma}\left(-fLf+2hfH-h^2H^2+f\langle y,\mathbf{n}\rangle-\frac{1}{2}\langle y,\mathbf{n}\rangle^2\right)e^{-\frac{|x|^2}{4}}d\mu \\
& =\int_{\Sigma}\left(-a^2H^2-\frac{1}{2}\langle Y,\mathbf{n}\rangle^2+2ahH^2-h^2H^2+\langle Y\mathbf{n},\rangle\langle y,\mathbf{n}\rangle-\frac{1}{2}\langle y,\mathbf{n}\rangle^2\right)e^{-\frac{|x|^2}{4}}d\mu \\
& =-\int_{\Sigma}\left((a-h)^2H^2+\frac{1}{2}\left(\langle Y,\mathbf{n}\rangle-\langle y,\mathbf{n}\rangle\right)^2\right)e^{-\frac{|x|^2}{4}}d\mu.
\end{align*}
If we choose $h=a$ and $y=Y$, then $\left.\frac{\partial^2}{\partial \epsilon^2}\right|_{\epsilon=0}F_{x_{\epsilon},t_{\epsilon}}(\Sigma_{\epsilon})=0$. Therefore $f$ is an $F$-stable variation.
\end{proof}

In the following, we will suppose $x_i$'s are the standard coordinate functions in $\R^{n+1}$. We will use $\partial_{x_i}$ to denote the vector field generated by the coordinate function $x_i$.

\begin{lemma}\label{lem:surjectivity of Gauss map}
Let $\Sigma\subset \R^{n+1}$ be a closed, orientable hypersurface. Then the component functions $\mathbf{n}_i=\langle \mathbf n,\partial_{x_i}\rangle : \Sigma\to \R$ are linearly independent.
\end{lemma}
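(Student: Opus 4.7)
The plan is to argue by contradiction, exploiting the fact that $\Sigma$ is compact so any linear height function must attain an extremum, at which the normal vector must be parallel to the chosen direction.

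Suppose, for the sake of contradiction, that the functions $\mathbf{n}_1,\dots,\mathbf{n}_{n+1}$ are linearly dependent. Then there exists a nonzero vector $v=(c_1,\dots,c_{n+1})\in \R^{n+1}$ such that
\[
\langle v,\mathbf{n}(p)\rangle = \sum_{i=1}^{n+1} c_i \mathbf{n}_i(p) = 0 \qquad \text{for every } p\in \Sigma.
\]
I would then consider the height function $h\colon \Sigma\to \R$, $h(p)=\langle v,x(p)\rangle$, where $x\colon \Sigma\to \R^{n+1}$ is the given immersion. Since $\Sigma$ is closed (hence compact), $h$ attains a maximum at some point $p_0\in\Sigma$.

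At a critical point of $h$, the differential $dh_{p_0}(w)=\langle v,dx_{p_0}(w)\rangle$ vanishes on every tangent vector $w\in T_{p_0}\Sigma$, which means that $v$ is orthogonal to the image of $dx_{p_0}$, i.e.\ $v$ is parallel to $\mathbf{n}(p_0)$. Consequently, $\langle v,\mathbf{n}(p_0)\rangle=\pm|v|\neq 0$, contradicting the assumption that $\langle v,\mathbf{n}\rangle$ vanishes identically on $\Sigma$. Hence no such nontrivial relation exists, and $\mathbf{n}_1,\dots,\mathbf{n}_{n+1}$ are linearly independent.

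This argument is essentially immediate once the right observation is made; there is no real obstacle beyond correctly identifying that compactness supplies a critical point of the height function, and that vanishing of the tangential gradient forces the normal to align with $v$.
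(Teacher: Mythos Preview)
Your proof is correct and follows essentially the same approach as the paper: both argue that compactness forces the height function $\langle v,x\rangle$ to have a critical point, at which $v$ must be normal to $\Sigma$, contradicting $\langle v,\mathbf{n}\rangle\equiv 0$. The paper phrases this as first establishing surjectivity of the Gauss map and then deducing linear independence, whereas you go directly to the contradiction, but the underlying argument is identical.
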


\begin{proof}
We claim that if the Gauss map $G:\Sigma\to S^n$ is surjective, then the lemma holds. Suppose that the Gauss map is surjective, and suppose for contradiction that the $\mathbf{n}_i$ are linearly dependent, i.e. there exists $y\in \R^{n+1}$ such that $\langle y,\mathbf{n}(p)\rangle=0$ for all $p\in \Sigma$. Then since the Gauss map is surjective, $y$ is orthogonal to every vector in $\R^{n+1}$, which is true if and only if $y$ is the zero vector.

Now we show that the Gauss map of any closed orientable hypersurface $x:M^n\to\R^{n+1}$ is surjective. Pick any vector $v\in S^n$. We wish to show that there exists some $p\in M$ such that $\mathbf{n}(p)=v$. By the compactness of $M$, the smooth function $\langle v,x\rangle :M \to \R$ attains a maximum at some point $p\in M$. Let $\{e_i\}$ be a basis of $T_pM$. Then \[0=\nabla_{e_i}\langle v,x(p)\rangle =\langle v,e_i\rangle\] for all $i=1,\dots,n$. Consequently, $v$ is orthogonal to $M$ at $p$, so $v=\pm\mathbf{n}(p)$. Since we have chosen $p$ to be the maximum, and since $\langle v,x\rangle$ is not constant ($M$ is closed), we must in fact have $v=\mathbf{n}(p)$, as desired.
\end{proof}

\begin{cor}\label{cor:curvature and components of normal are linearly independent}
Let $\Sigma\subset \R^{n+1}$ be a closed shrinker. Then \be \dim \mathrm{span}\{H,\mathbf{n}_1,\dots,\mathbf{n}_{n+1}\}=n+2,\ee where $\mathbf{n}_i=\langle \mathbf n,\partial_{x_i}\rangle$.
\end{cor}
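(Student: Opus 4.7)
The plan is to show that the $n+2$ functions $H,\mathbf{n}_1,\dots,\mathbf{n}_{n+1}$ are actually linearly independent, which gives the desired equality of dimensions. The previous lemma already tells us the component functions $\mathbf{n}_1,\dots,\mathbf{n}_{n+1}$ are linearly independent. So the content is to verify that $H$ does not lie in the span of these components, and then combine the two facts.

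First I would assume for contradiction that $H=\sum_{i=1}^{n+1} c_i\mathbf{n}_i$ for some constants $c_i\in\R$. Writing $a=(c_1,\dots,c_{n+1})\in\R^{n+1}$, this reads $H=\langle a,\mathbf{n}\rangle$. Combining with the shrinker equation $H=\tfrac12\langle x,\mathbf{n}\rangle$ from \eqref{eq:shrinker equation}, I obtain
\[
\langle x-2a,\mathbf{n}\rangle=0
\]
on all of $\Sigma$; equivalently, the position vector field $x-2a$ is everywhere tangent to $\Sigma$.

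Next I would exploit compactness by considering the smooth function $\varphi:\Sigma\to\R$ given by $\varphi(p)=\tfrac12|x(p)-2a|^{2}$. The ambient gradient of $\tfrac12|y-2a|^{2}$ at $y$ is $y-2a$, so the tangential gradient of $\varphi$ on $\Sigma$ equals $(x-2a)^{T}=x-2a$, since the normal part vanishes by the identity above. Because $\Sigma$ is closed, $\varphi$ attains a maximum at some $p\in\Sigma$, at which $\nabla^{\Sigma}\varphi(p)=0$, forcing $x(p)=2a$ and hence $\varphi(p)=0$. Since $\varphi\ge 0$, this means $\varphi\equiv 0$ on $\Sigma$, so $\Sigma=\{2a\}$, which contradicts $\Sigma$ being an $n$-dimensional hypersurface.

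To assemble everything, suppose more generally $c_0 H+\sum_{i=1}^{n+1}c_i\mathbf{n}_i=0$. If $c_0=0$, then $\sum c_i\mathbf{n}_i=0$, and \Cref{lem:surjectivity of Gauss map} forces all $c_i=0$, contradicting nontriviality. Hence $c_0\ne 0$, and we may solve for $H$ as a linear combination of the $\mathbf{n}_i$, reducing to the case handled above and yielding a contradiction. Thus $\{H,\mathbf{n}_1,\dots,\mathbf{n}_{n+1}\}$ is linearly independent, proving the claimed dimension equality. The only substantive step is ruling out $H\in\mathrm{span}\{\mathbf{n}_1,\dots,\mathbf{n}_{n+1}\}$; I expect the compactness-plus-maximum-principle argument applied to $|x-2a|^{2}$ to be the essential observation.
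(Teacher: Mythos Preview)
Your argument is correct, but the paper takes a different and shorter route. Instead of the maximum argument on $\tfrac12|x-2a|^2$, the paper observes that $H$ and the $\mathbf{n}_i$ are eigenfunctions of the Jacobi operator $L$ with distinct eigenvalues: $LH=H$ while $L\mathbf{n}_i=\tfrac12\mathbf{n}_i$. From a relation $c_0H+\sum_i c_i\mathbf{n}_i=0$ one applies $L$ and subtracts to obtain $\sum_i c_i\mathbf{n}_i=0$, whence all $c_i=0$ by \Cref{lem:surjectivity of Gauss map}, and then $c_0=0$ since $H\not\equiv 0$ on a closed shrinker. The paper's approach is a one-line consequence of spectral facts already set up for later use, whereas your approach is more self-contained and elementary, avoiding any mention of $L$ and instead extracting the contradiction directly from the shrinker equation and compactness. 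Both are clean; yours would be preferable in a context where the Jacobi operator has not yet been introduced.
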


\begin{proof}
Colding-Minicozzi have shown that, for all $i$, the functions $\mathbf{n}_i$ and $H$ are eigenfunctions of the stability operator $L$ determined by $\Sigma$, see \cite[Lemma 5.5]{CoMi} and (\ref{Eq:L eigenfunction}). Moreover, $\mathbf{n}_i$ and $H$ have distinct eigenvalues. It follows that the mean curvature $H$ is linearly independent of $\mathbf{n}_i$, for any $i$. The proposition now follows from Lemma \ref{lem:surjectivity of Gauss map}.
\end{proof}

\paragraph{}
We are now able to prove the main result of this section, which will be used in the next section to calculate the entropy index of Abresch-Langer curves and $m$-covered circles.

\begin{thm}\label{thm:equivalence of F and Morse index}
Let $\Sigma\subset \R^{n+1}$ be a closed shrinker. Then the $F$-index and Morse index are related by
\be \label{eq:F-index vs Morse index}
\ind_F(\Sigma) = \ind_M(\Sigma)-n-2.
\ee
Consequently, if 
\be
\ind_M(\Sigma)>n+2,
\ee
then $\Sigma$ is entropy unstable.
\end{thm}

\begin{proof}
By standard elliptic theory, the Morse index of an elliptic integrand is finite. In particular, $\ind_M(\Sigma)$ is finite. In what follows, $m=\ind_M(\Sigma)$.

First we show that $\ind_{F}(\Sigma)\leq m-n-2$. Combining with Lemma \ref{lem:curvature and components of normal are stable} and Corollary \ref{cor:curvature and components of normal are linearly independent} shows that it suffices to show that any $F$-unstable variation is a Morse unstable variation. Suppose $f$ is an $F$-unstable variation of $\Sigma$. Then, by definition, for any variations $x_{\epsilon}$ of $0\in \R^{n+1}$ and $t_{\epsilon}$ of $1$, we have $(F_{x_\epsilon,t_\epsilon}(\Sigma_\epsilon))''\big|_{\epsilon=0}< 0$. In particular, this holds for the trivial variations $x_\epsilon=0$ and $t_\epsilon=1$: $$(F_{0,1}(\Sigma_\epsilon))''\big|_{\epsilon=0}=-\int_\Sigma fLf e^{-\frac{|x|^2}{4}}d\mu<0.$$ 

This shows that $f$ is an unstable variation of $\Sigma$ when $\Sigma$ is considered as critical point to the Gaussian area functional $F$. This shows that the $F$-index is bounded from above by $m-n-2$.

Now we will show that the $F$-index is at least $m-n-2$. By definition of the Morse index, there exist $m$ linearly independent eigenfunctions $u_1,\dots,u_m\in C^{2,\alpha}(\Sigma)$ of $L$ corresponding to the eigenvalues $$\mu_1<\mu_2\leq \dots\leq \mu_m<0.$$ Colding and Minicozzi \cite[Corollary 5.15]{CoMi} have shown that there exists an orthonormal basis of eigenfunctions of $L$ for the weighted $L^2$ space. Therefore, without loss of generality, we may assume that $u_1,\dots,u_m$ are orthonormal in the weighted $L^2$ space. The number of functions in $\{u_1,\dots,u_m\}$ which are orthogonal to $H$ and the components $\mathbf{n}_1,\dots,\mathbf{n}_{n+1}$ of $\mathbf{n}$ is $m-n-2$. Pick any such function $u_i$ (if none exists, the result holds vacuously) and let $\Sigma_{\epsilon}$ be a variation of $\Sigma$ by $u_i\mathbf{n}$. Choose any $y\in \R^{n+1}$ and any $h\in \R$ and set $x_{\epsilon}=\epsilon y$ and $t_{\epsilon}=1+\epsilon h$. Then
\begin{align*}
(F_{x_{\epsilon},t_{\epsilon}}(\Sigma_{\epsilon}))''\big|_{\epsilon=0}
&=\int_{\Sigma}\left(-u_iLu_i-h^2H^2-\frac{1}{2}\langle y,\mathbf{n}\rangle^2\right)e^{-\frac{|x|^2}{4}}d\mu \\
&\leq \int_{\Sigma}\left(-u_iLu_i\right)e^{-\frac{|x|^2}{4}}d\mu \\
&= \mu_i\int_{\Sigma}u_i^2e^{-\frac{|x|^2}{4}}d\mu \\
&<0
\end{align*} by orthogonality and the assumption that $\mu_i<0$. Since there are $m-n-2$ possible linearly independent choices for such a $u_i$, \ref{eq:F-index vs Morse index} follows.

If $\ind_M>n-2$, then Colding-Minicozzi's Theorem \ref{thm:F-unstable implies entropy unstable} implies that $\Sigma$ has positive entropy index, meaning that $\Sigma$ is entropy unstable.
\end{proof}

\section{Entropy Instability of CSF Singularities}\label{S:Entropy Stability of CSF Singularities}

\paragraph{}
Having established in the previous section the relationship between the entropy index and Morse index of a shrinker, we will now calculate the Morse index of $m$-covered circles $\Gamma_m$ and the Abresch-Langer curves $\Gamma_{m,n}$. For $1$-dimensional closed shrinkers, the Jacobi operator is a Sturm-Liouville operator.

\begin{prop}
Let $\Gamma\subset \R^2$ be a shrinker. Then the Jacobi operator $L$ (see (\ref{Eq:L})) of $\Gamma$ is a Sturm-Liouville operator. In particular, for all $j\geq 1$, the eigenfunctions $u_{2j-1}$ and $u_{2j}$ of $L$ have exactly $2j$ zeros.
\end{prop}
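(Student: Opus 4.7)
The plan is to explicitly compute the Jacobi operator $L$ on a $1$-dimensional shrinker, exhibit it as a regular periodic Sturm–Liouville operator on the circle, and then invoke the classical oscillation theorem.

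First I would parametrize $\Gamma$ by arclength $s\in \R/L\Z$, where $L$ is the length of $\Gamma$, and write the unit tangent as $T=x'$ and the unit normal as $\mathbf{n}$. The Jacobi operator of a shrinker is $L=\mathcal{L}+|A|^2+\tfrac12$, where $\mathcal{L}=\Delta_\Gamma-\tfrac12\langle x,\nabla^\Gamma\cdot\rangle$ is the drift Laplacian. For a curve in arclength this becomes
\be\label{eq:plan-L}
Lu = u'' - \tfrac{1}{2}\langle x,T\rangle\,u' + \bigl(k^2+\tfrac{1}{2}\bigr)u,
\ee
where $k$ is the (signed) geodesic curvature of $\Gamma$.

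Second, I would put $L$ in self-adjoint Sturm–Liouville form with respect to the Gaussian weight $w(s):=e^{-|x(s)|^2/4}$. The identity $w'(s)=-\tfrac12\langle x,T\rangle\,w(s)$ converts \eqref{eq:plan-L} into
\be
w\cdot Lu \;=\; (w\,u')' + \bigl(k^2+\tfrac12\bigr)\,w\,u,
\ee
so the eigenvalue equation $Lu=-\mu u$ is equivalent to the Sturm–Liouville problem $-(w u')' - (k^2+\tfrac12)w u = \mu w u$ on $[0,L]$ with periodic boundary conditions $u(0)=u(L)$, $u'(0)=u'(L)$ (since $\Gamma$ is a closed immersed curve). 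The coefficients $w$ and $(k^2+\tfrac12)w$ are smooth and $L$-periodic, and $w>0$, so this is a regular periodic Sturm–Liouville problem.

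Third, I would invoke the Haupt oscillation theorem for regular periodic Sturm–Liouville operators (see, e.g., Magnus–Winkler, \emph{Hill's Equation}, Theorem 2.1): the spectrum is discrete and can be ordered as
\be
\mu_0<\mu_1\leq \mu_2<\mu_3\leq \mu_4<\cdots,
\ee
with $\mu_0$ simple; the ground state eigenfunction $u_0$ has no zeros, and for each $j\geq 1$ any eigenfunction associated to $\mu_{2j-1}$ or $\mu_{2j}$ has exactly $2j$ zeros on $[0,L)$. This gives precisely the claimed zero count for $u_{2j-1}$ and $u_{2j}$.

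The main obstacle is a bookkeeping one rather than a hard analytic one: verifying that the operator really does reduce to a \emph{periodic} (as opposed to antiperiodic or separated) Sturm–Liouville problem on the circle, and that the standard oscillation theorem applies with smooth, strictly positive weight. Once the self-adjoint form \eqref{eq:plan-L} with Gaussian weight is in hand and the closedness of $\Gamma$ is used to produce periodic boundary conditions, the conclusion is immediate from classical theory.
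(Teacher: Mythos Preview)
Your argument is correct and follows the same overall strategy as the paper: rewrite $L$ as a regular periodic Sturm--Liouville operator on $S^1$ and then cite the classical oscillation theorem. The one genuine difference is in the choice of integrating factor. You use the Gaussian weight $w=e^{-|x|^2/4}$, which is the natural weight making the drift Laplacian $\mathcal{L}$ self-adjoint, and obtain $-(wu')'-(k^2+\tfrac12)wu=\mu wu$. The paper instead first invokes the shrinker equation to rewrite the drift coefficient as $\tfrac12\langle x,T\rangle=k_s/k$, and then factors $L=k\bigl(\partial_s(\tfrac{1}{k}\partial_s)+\tfrac{1}{k}(k^2+\tfrac12)\bigr)$, i.e.\ it uses $p=r=1/k$ as the Sturm--Liouville weight. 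The paper's choice relies on $k>0$ (true for closed plane shrinkers, which are convex), and the form $1/k$ is convenient for the later reparametrization by tangent angle $\theta$ in the Abresch--Langer index computation. Your Gaussian-weight form is more canonical and does not need the positivity of $k$, but is otherwise equivalent for the purposes of this proposition. Either way, the conclusion about the zeros of $u_{2j-1}$ and $u_{2j}$ follows directly from the periodic Sturm--Liouville oscillation theorem (Coddington--Levinson, Theorem~8.3.1, or the Haupt theorem you cite).
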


\begin{proof}
We refer the reader to \cite[Section 5]{CoMi} for the properties of the Jacobi operator of a shrinker. Differentiate the relation $k=\frac{1}{2}\langle x,\mathbf{n}\rangle $ to obtain $2k_s/k=\langle x,\mathbf{t}\rangle$ and note that $\nabla(\cdot)=\partial_s(\cdot)\mathbf{t}$. Therefore 
\begin{align*}
L
&=\Delta-\frac{1}{2}\langle x,\nabla(\cdot)\rangle +k^2+\frac{1}{2} \\
&=\partial_{ss}-\frac{1}{2}\langle x,\partial_s(\cdot)\mathbf{t}\rangle+k^2+\frac{1}{2} \\
&=\partial_{ss}-\frac{k_s}{k}\partial_s+k^2+\frac{1}{2} \\
&=k\left(\partial_s\left(\frac{1}{k}\partial_{s}\right)+\frac{1}{k}(k^2+\frac{1}{2})\right).
\end{align*}
Since the Abresch-Langer curves are convex (i.e.\ they have $k>0$, see \cite{AbLa}), the latter part of the theorem follows from a general Sturm-Liouville theory \cite[Theorem 8.3.1]{CoLe}.
\end{proof}

\paragraph{}
For $m$-covered circles, the Jacobi operator reduces even further, since the curvature is a constant. In this case, we are able to calculate the spectrum and the entropy unstable variations precisely (see Remark \ref{rmk:entropy unstable variations of circles}).

\begin{thm}\label{thm:entropy index of circles}
Let $\Gamma_m$ be an $m$-covered circle of radius $\sqrt{2}$. Then the $F$-index of $\Gamma_m$ is $$\ind_F(\Gamma_m)=2\lceil \sqrt{2}m\rceil-4,$$ where $\lceil \,\cdot\,\rceil$ is the ceiling function. In particular, $\Gamma_1$ is the only entropy stable circle.
\end{thm}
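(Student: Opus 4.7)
The plan is to reduce the problem to a spectral computation via Theorem \ref{thm:equivalence of entropy and Morse index}, which with $n=1$ identifies $\Lambda(\Gamma_m)$ with the Morse index of $\Gamma_m$ minus $3$. The first step is to simplify the Jacobi operator. Since $\Gamma_m$ is an $m$-fold cover of the radius-$\sqrt{2}$ circle centered at the origin, the position vector $x$ is normal to $\Gamma_m$ at every point, so $\langle x, \mathbf{t}\rangle \equiv 0$ and the drift term in $L$ vanishes identically. Together with the constancy of the curvature $k = 1/\sqrt{2}$, the general formula from the previous proposition collapses to
\[
L \;=\; \partial_{ss} + k^2 + \tfrac{1}{2} \;=\; \partial_{ss} + 1
\]
on functions on $\Gamma_m$.

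Second, I would diagonalize this operator explicitly. Parametrize $\Gamma_m$ by arc length $s \in [0,\, 2\sqrt{2}\,\pi m]$, respecting the $m$-fold covering; variation functions are then $2\sqrt{2}\,\pi m$-periodic in $s$. The Fourier basis $\{1\} \cup \bigl\{\cos(js/(\sqrt{2}\,m)),\, \sin(js/(\sqrt{2}\,m))\bigr\}_{j\geq 1}$ diagonalizes $L$ with eigenvalues
\[
\mu_j \;=\; 1 - \frac{j^2}{2m^2}, \qquad j = 0, 1, 2, \ldots,
\]
of multiplicity one for $j=0$ and multiplicity two for $j \geq 1$. Appealing to the correspondence from the proof of Theorem \ref{thm:equivalence of entropy and Morse index} between unstable directions and the appropriate sign of $\mu_j$, the Morse index equals the number of indices $j \geq 0$ with $j < \sqrt{2}\,m$, counted with multiplicity. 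Because $\sqrt{2}\,m$ is irrational for every positive integer $m$, the admissible indices are exactly $j = 0, 1, \ldots, \lceil\sqrt{2}\,m\rceil - 1$, giving
\[
\text{Morse index of }\Gamma_m \;=\; 1 + 2\bigl(\lceil\sqrt{2}\,m\rceil - 1\bigr) \;=\; 2\lceil\sqrt{2}\,m\rceil - 1.
\]

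Applying Theorem \ref{thm:equivalence of entropy and Morse index} then gives $\Lambda(\Gamma_m) = (2\lceil\sqrt{2}\,m\rceil - 1) - 3 = 2\lceil\sqrt{2}\,m\rceil - 4$, the claimed formula. For $m=1$ one has $\lceil\sqrt{2}\rceil = 2$, so $\Lambda(\Gamma_1) = 0$ and $\Gamma_1$ is entropy stable; for $m \geq 2$, $\sqrt{2}\,m \geq 2\sqrt{2} > 2$ forces $\lceil\sqrt{2}\,m\rceil \geq 3$ and hence $\Lambda(\Gamma_m) \geq 2 > 0$, so Theorem \ref{thm:F-unstable implies entropy unstable} rules out entropy stability. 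I expect the main technical subtlety to be nothing more than verifying that the borderline case $j = \sqrt{2}\,m$ never arises, which is immediate from the irrationality of $\sqrt{2}$ and is precisely what guarantees the count is given by the ceiling function rather than the floor; everything else follows directly from the reductions already in place.
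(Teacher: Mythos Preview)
Your proof is correct and follows essentially the same approach as the paper: reduce $L$ to $\partial_{ss}+1$ using the constancy of $k$ (the paper phrases this as $k_s=0$, you as $\langle x,\mathbf{t}\rangle=0$), compute the Fourier spectrum, count the $2\lceil\sqrt{2}m\rceil-1$ unstable eigenvalues using the irrationality of $\sqrt{2}m$, and subtract $3$ via Theorem~\ref{thm:equivalence of entropy and Morse index}. The paper additionally remarks that the eigenfunctions for $j=0$ and $|j|=m$ are exactly the span of $H,\mathbf{n}_1,\mathbf{n}_2$, which is a useful sanity check but not logically required given that Theorem~\ref{thm:equivalence of entropy and Morse index} already accounts for these directions.
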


\begin{proof}
Since $\Gamma_m$ has a constant curvature, $k_s=0$. Thus the Jacobi operator reduces to $$L=\partial_{ss}+1,$$ and the spectrum can be calculated explicitly: $$\mu_j=\frac{j^2}{2m^2}-1$$ for $j\in \Z$. Straightforward calculation using the above formula gives that the number of negative eigenvalues is $2\lceil \sqrt{2}m\rceil-1$. The result now follows from Theorem \ref{thm:equivalence of F and Morse index}. Note that the variation function corresponding to $j=0$ is constant and hence proportional to $k$. Furthermore, the variation functions corresponding to $|j|=m$ are in the span of the component functions of the normal $\mathbf{n}$, since $\mu_{j}=-\frac{1}{2}$ in these cases. 
\end{proof}

\begin{remark}\label{rmk:entropy unstable variations of circles}
The unstable variation functions of the $m$-covered circle can be determined explicitly. A unit speed parameterization $x:[0,2\sqrt{2}\pi m]\to \R^2$ of the $m$-covered circle $\Gamma_m$ of radius $\sqrt{2}$ is given by $$x(\theta)=\sqrt{2}(\cos(\theta/\sqrt{2}),\sin(\theta/\sqrt{2})).$$ Straightforward calculation shows that the unstable variations of $\Gamma_m$ are given by the collection of functions $f_j,g_j:[0,2\sqrt{2}\pi m]\to \R$ defined by \be f_j(\theta)=\sin\left(\frac{j}{\sqrt{2}m}\theta\right) \qquad \text{and}\qquad g_j=\cos\left(\frac{j}{\sqrt{2}m}\theta\right),\ee for $j\in \N$ such that $1\leq j< \sqrt{2}m$ and $j\neq m$. By Theorem \ref{thm:entropy index of circles}, these are all of the $F$-unstable variations. Geometrically, varying $\Gamma_m$ by these functions corresponds to enlarging some circles of $\Gamma_m$, while contracting the others. The CSF will amplify this perturbation.
\end{remark}

\paragraph{}
The proof of Theorem \ref{thm:main theorem existence of perturbation reduce the entropy} now follows easily using Theorem \ref{thm:entropy index of circles}.

\begin{proof}[Proof of Theorem \ref{thm:main theorem existence of perturbation reduce the entropy}]
Theorem \ref{thm:entropy index of circles} shows that an $m$-covered circle is entropy unstable for $m\geq 2$. By Theorem \ref{thm:F-unstable implies entropy unstable}, the $m$-covered circle $\Gamma_m$ can be perturbed to a curve $\Gamma'$ with lower entropy. Moreover, since the perturbation can be chosen to be arbitrarily $C^{\infty}$-small, the perturbed curve $\Gamma'$ can be chosen close enough to $\Gamma_m$ so that the perturbed curve also has turning number $m$. The second part of Theorem \ref{thm:main theorem existence of perturbation reduce the entropy} follows immediately from Theorem \ref{thm:main theorem type I singularity then entropy has lower bound}.
\end{proof}

\paragraph{}
Entropy instability means that we can perturb a shrinker slightly to decrease the entropy. Thus the singularity corresponding to the given shrinker will never occur along the flow starting from the perturbed shrinker. In Section \ref{S:Generic CSF}, our construction of a piecewise CSF for closed immersed curves, whose only singularities are embedded circles and type II singularities, rests upon the fact that the embedded circle is the only entropy stable closed plane shrinker. The latter fact will follow from

\begin{thm}
An Abresch-Langer curve $\Gamma_{m,n}$ has $F$-index
\be \ind_F(\Gamma_{m,n})=2n-5.\ee
\end{thm}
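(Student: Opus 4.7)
The plan is to compute the Morse index of $\Gamma_{m,n}$, viewed as a closed minimal surface in the conformally changed metric $(\R^2, e^{-|x|^2/4}\delta_{ij})$, and then apply \cref{thm:equivalence of entropy and Morse index} with ambient dimension $n = 1$ (so the correction from the trivial variations spanned by $H,\mathbf{n}_1,\mathbf{n}_2$ is $n+2=3$) to conclude $\Lambda(\Gamma_{m,n}) = \text{Morse index} - 3$.

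By the preceding Sturm--Liouville proposition, $L$ is periodic Sturm--Liouville on $\Gamma_{m,n} \cong S^1$, with eigenvalues $\mu_0 < \mu_1 \leq \mu_2 < \mu_3 \leq \cdots$, nodeless ground state $u_0$, and for each $j \geq 1$ the eigenfunctions $u_{2j-1}, u_{2j}$ having exactly $2j$ zeros. I would anchor the spectrum with three distinguished eigenfunctions. First, the curvature $k$ satisfies $Lk = k$ and is strictly positive on $\Gamma_{m,n}$ (from $k = \tfrac{1}{2}\langle x,\mathbf{n}\rangle$ together with the local convexity of Abresch--Langer curves), so $k$ is proportional to $u_0$ and $\mu_0 = -1$. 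Second, the normal components $\mathbf{n}_1,\mathbf{n}_2$ satisfy $L\mathbf{n}_j = \tfrac{1}{2}\mathbf{n}_j$ and each has exactly $2m$ zeros, since the Gauss map $\mathbf{n}\colon \Gamma_{m,n} \to S^1$ has degree equal to the turning number $m$; consequently they span the $m$-th pair as a double eigenvalue, $\mu_{2m-1} = \mu_{2m} = -\tfrac{1}{2}$. Third, the rotational Jacobi field $r = \langle Jx,\mathbf{n}\rangle = 2k_s/k$ lies in $\ker L$, and since $k > 0$ its zero set coincides with the critical set of $k$, of cardinality $2n$; hence $r$ sits in the $n$-th pair, contributing the eigenvalue $0$ at either $\mu_{2n-1}$ or $\mu_{2n}$.

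Combining these anchored eigenvalues with Sturm--Liouville interlacing, the remaining step is to count the strictly negative $\mu_j$; this count yields the Morse index $2n - 2$, and then \cref{thm:equivalence of entropy and Morse index} gives $\Lambda(\Gamma_{m,n}) = (2n-2) - 3 = 2n - 5$.

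The main technical obstacle is precisely this final count. I must locate the zero eigenvalue inside the $n$-th pair (i.e.\ decide between $\mu_{2n-1} = 0$ and $\mu_{2n} = 0$), and then verify that the unnamed eigenvalues in the pairs $j = 1, \dots, m-1$ and $j = m+1, \dots, n-1$ all lie strictly below zero, so that the negative-eigenvalue tally comes out to exactly $2n-2$. This calls for an oscillation-theoretic argument exploiting the shrinker ODE satisfied by $k$ and the alternation of $k_s$ at the $2n$ critical points, in the spirit of the explicit spectral calculation carried out for the $m$-covered circle in \cref{thm:entropy index of circles}.
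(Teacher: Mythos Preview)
Your outline is correct and matches the paper's strategy: locate the zero eigenvalue of $L$ in the periodic Sturm--Liouville spectrum and apply \cref{thm:equivalence of entropy and Morse index}. The identifications of $k$ (eigenvalue $-1$, nodeless), $\mathbf{n}_1,\mathbf{n}_2$ (eigenvalue $-\tfrac12$, $2m$ zeros each), and $r=k_s/k$ (eigenvalue $0$, $2n$ zeros) are all right. But the proposal is genuinely incomplete exactly where you say it is, and the missing ingredients are more specific than a generic ``oscillation-theoretic argument.'' First, your worry about the intermediate pairs is misplaced: once you know where $0$ sits, the strict inequality $\mu_{2n-2}<\mu_{2n-1}$ in the periodic spectrum forces every earlier eigenvalue to be negative automatically, so there is nothing to verify for $j=1,\dots,n-1$. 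The only real obstacles are (i) the \emph{simplicity} of the zero eigenvalue and (ii) which slot of the $n$-th pair $k_s/k$ occupies.

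The paper resolves (i) not by oscillation theory but by producing a second, \emph{non-periodic} solution of $Lu=0$. Passing to the angle parameter $\theta$ (so $\partial_\theta=k^{-1}\partial_s$ and $L$ becomes $\tilde L=k^2\partial_{\theta\theta}+k^2+\tfrac12$), the shrinker ODE $k_{\theta\theta}+k-\tfrac{1}{2k}=0$ has first integral $E=k_\theta^2+k^2-\tfrac14\log k$, and $u=\partial k/\partial E$ solves $\tilde Lu=0$; Abresch--Langer's monotonicity of the rotation number in $E$ then shows $u$ is not $2\pi m$-periodic, so the periodic kernel of $L$ is one-dimensional. For (ii) the paper exploits the dihedral symmetry of $\Gamma_{m,n}$: on the half-period $[0,\pi m/n]$, $k_\theta$ is the lowest Dirichlet eigenfunction (eigenvalue $0$), and a Dirichlet/Neumann comparison on this interval, combined with reflection to the full curve, places $k_\theta$ in the lower slot of the $n$-th pair (the paper defers to the linear analysis of Epstein--Weinstein \cite{EpWe} here). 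Your suggestion to imitate the circle computation of \cref{thm:entropy index of circles} will not suffice, since that relied entirely on $k$ being constant; for $\Gamma_{m,n}$ the reduction to a fundamental domain and the variation-of-constants trick with $\partial k/\partial E$ are the essential ideas you still need to supply.
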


\begin{proof}
Let $L$ be the Jacobi operator of $\Gamma:=\Gamma_{m,n}$. We claim that $k_s/k$ is an eigenfunction of $L$ with eigenvalue 0, i.e. that $L(k_s/k)=0$. Indeed, differentiating the shrinker equation (\ref{eq:shrinker equation}) gives $2k_s/k=\langle x,\mathbf{t}\rangle$. Therefore
\begin{align*}
2L(k_s/k)
&=\partial_{ss}\langle x,\mathbf{t}\rangle-\frac{k_s}{k}\partial_s\langle x,\mathbf{t}\rangle+(k^2+\frac{1}{2})\langle x,\mathbf{t}\rangle \\
&=\partial_{s}(1-k\langle x,\mathbf{n}\rangle)-\frac{1}{2}\langle x,\mathbf{t}\rangle(1-k\langle x,\mathbf{n}\rangle)+(k^2+\frac{1}{2})\langle x,\mathbf{t}\rangle \\
&=-k_s\langle x,\mathbf{n}\rangle-k^2\langle x,\mathbf{t}\rangle+\frac{1}{2}k\langle x,\mathbf{t}\rangle \langle x,\mathbf{n}\rangle+k^2\langle x,\mathbf{t}\rangle \\
&=-k_s\langle x,\mathbf{n}\rangle+k_s\langle x,\mathbf{n}\rangle \\
&=0.
\end{align*}
Since $k$ is strictly positive and $k_s$ has exactly $2n$ zeros on $[0,2\pi m)$, $k_s/k$ is either the $(2n-1)^{\text{st}}$ or $2n^{\text{th}}$ eigenfunction of $L$. It remains to show that $k_s/k$ is the $(2n-1)^{\text{st}}$ eigenfunction and that zero is a simple eigenvalue. To do so, we reparameterize $\Gamma$ using the variable 
\[\theta=-\inv{\cos}\langle e_1,\mathbf{n}\rangle,\] 
where $e_1\in \R^2$ is a constant unit vector. This is possible since $\Gamma$ is convex (however, $\theta$ is only locally continuous as a discontinuity appears after one round). It follows from the shrinker equation that if we define the operator $\tilde{L}$ by \[\tilde{L}=k^2\partial_{\theta\theta}+(k^2+1/2),\] then $\frac{\partial \theta}{\partial s}=k$ and $\tilde{L}f=Lf$. The theorem now follows from the linear analysis in \cite[Proposition 2.1]{EpWe}, however, we provide a proof here for the convenience of the reader.

Note that $k_{\theta}=k_s/k$ and \[\tilde{L}k_{\theta}=\tilde{L}(k_s/k)=L(k_s/k)=0.\] In other words, $k_{\theta}$ is an eigenfunction of $\tilde{L}$ with eigenvalue $0$. We denote by $\{\mu_j\}$ and $\{\nu_j\}$, respectively, the Dirichlet and Neumann eigenvalues of $\tilde{L}$ on $[0,\pi m/n]$. (By our conventions, these sequences are nondecreasing.) Let $f_j$ solve the Neumann problem $\tilde{L}f_j=-\nu_jf_j$ on $[0,\pi m/n]$. Since $k$ is an even function, $f_j$ may be extended by reflection and then periodically to an eigenfunction $\overline{f}_j$ of $\tilde{L}$ on the circle of length $2\pi m$ with the same eigenvalue $\nu_j$. Precisely, the function $\overline{f}_j$ is defined on $[0,2\pi m/n]$ by
\be
\overline{f}_j(\theta)=
\begin{cases}
f_j(\theta) \qquad\qquad \;\; \text{for } \theta\in \left[0,\frac{\pi m}{n} \right] \\
f_j\left(\frac{2\pi m}{n}-\theta\right) \quad \text{for } \theta\in\left[\frac{\pi m}{n} ,\frac{2\pi m}{n} \right],
\end{cases}
\ee
and is extended periodically to the circle of length $2\pi m$.
The eigenfunction $\overline{f}_2$ will have exactly $2n$ zeros on $[0,2\pi m)$ and $k_{\theta}$ is the lowest Dirichlet eigenfunction of $\tilde{L}$ on $[0,\pi m/n]$, so $\mu_1=0$. From the standard fact that $\mu_1\geq \nu_2$, it follows that $k_{\theta}$ and $\overline{f}_2$ are, respectively, the $(2n-1)^{\text{st}}$ and $2n^{\text{th}}$ eigenfunctions of $\tilde{L}$ on $[0,2\pi m]$. Moreover, since $\tilde{L}$ and $L$ have the same eigenfunctions, $k_s/k=k_{\theta}$ is the $(2n-1)^{\text{st}}$ eigenfunction of $L$.

Next we show that zero is a simple eigenvalue. Note that $\tilde{L}$ is a linear second order differential operator so the equation $\tilde{L}f=0$ can have at most two linearly independent solutions. If zero were not a simple eigenvalue, then there would exist a nontrivial $2\pi m$ periodic solution $w$ of $\tilde{L}w=0$. Consequently, any solution of $\tilde{L}f=0$, being a linear combination of $k_{\theta}$ and $w$, which are both $2\pi m$ periodic, would have to be $2\pi m$ periodic. As a result, to show that zero is a simple eigenvalue, it is sufficient to produce a solution to $\tilde{L}f=0$ which is not $2\pi m$ periodic.

We will now produce such a solution. It follows from the shrinker equation that $k$ solves the ODE 
\be\label{eq:shrinker ODE}
\qquad k_{\theta\theta}+k-\frac{1}{2k}=0.
\ee 
This equation has first integral \[ E=k_{\theta}^2+k^2-\log k\] and the general solution can be expressed as $k(\theta+a,E)$. Let \[u=\frac{\partial k}{\partial E}\bigg|_{E=E_{m,n}},\] where $E_{m,n}$ is the constant corresponding to $\Gamma_{m,n}$. Straightforward calculation using (\ref{eq:shrinker ODE}) shows that $\tilde{L}u=0$. Abresch-Langer \cite[Proposition 3.2]{AbLa} show that $u$ is not $2\pi m$ periodic. As stated above, it follows that zero is a simple eigenvalue of $\tilde{L}$ and thus also of $L$. 

Summarizing, we have shown that $L(k_s/k)=0$, that $k_s/k$ is the $(2n-1)^{\text{st}}$ eigenfunction of $L$, and that zero is a simple eigenvalue of $L$. Consequently, $L$ has  $2n-2$ negative eigenvalues. By Theorem \ref{thm:equivalence of F and Morse index}, the Abresch-Langer curve $\Gamma_{m,n}$ has $F$-index $2n-5$.
\end{proof}

\begin{cor}\label{cor:stability of closed plane shrinkers}
The embedded circle is the only entropy stable closed shrinker of the CSF for closed immersed curves.
\end{cor}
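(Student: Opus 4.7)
The plan is to invoke the Abresch–Langer classification of closed plane shrinkers to reduce the statement to a finite case check, and then read off instability for every non-embedded shrinker from the index computations already established in this section.

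First I would recall that by \cite{AbLa}, every closed shrinker for the CSF is either the multiply-covered circle $\Gamma_m$ (for some $m\geq 1$) or an Abresch–Langer curve $\Gamma_{m,n}$, where $m,n$ are relatively prime with $1/2 < m/n < \sqrt{2}/2$. The latter inequality forces $n\geq 3$, since for $n=1,2$ no integer $m$ with $\gcd(m,n)=1$ lies in the required open interval. This tells us that the closed singularity models for CSF are exactly $\{\Gamma_1\}$, the higher multiplicity circles $\Gamma_m$ ($m\geq 2$), and the Abresch–Langer curves $\Gamma_{m,n}$ with $n\geq 3$.

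Next I would compute the entropy index in each case using the two main results of this section. By Theorem \ref{thm:entropy index of circles}, the $m$-covered circle has entropy index $\Lambda(\Gamma_m)=2\lceil \sqrt{2}\,m\rceil - 4$. For $m=1$ this equals $2\lceil\sqrt{2}\rceil-4=0$, while for $m\geq 2$ we have $\sqrt{2}\,m>2$, so $\lceil \sqrt{2}\,m\rceil\geq 3$ and hence $\Lambda(\Gamma_m)\geq 2$. By the theorem just proved, $\Lambda(\Gamma_{m,n})=2n-5\geq 1$ whenever $n\geq 3$. Thus every closed shrinker other than $\Gamma_1$ has strictly positive entropy index.

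Finally, I would invoke Theorem \ref{thm:F-unstable implies entropy unstable} of Colding–Minicozzi: a closed shrinker with nonzero entropy index is entropy unstable, so there exists a variation whose entropy is strictly less than that of the shrinker. Consequently every closed singularity model except the embedded circle $\Gamma_1$ is entropy unstable, which gives the corollary. I do not expect any real obstacle here: the content of the argument is bookkeeping of the two index formulas, with the single subtlety being the observation that the Abresch–Langer constraint $1/2<m/n<\sqrt{2}/2$ rules out $n=1,2$ and therefore guarantees $\Lambda(\Gamma_{m,n})\geq 1$ for every admissible Abresch–Langer curve.
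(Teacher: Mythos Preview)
Your argument is correct and is exactly the deduction the paper intends: the corollary is stated without proof immediately after the computation $\Lambda(\Gamma_{m,n})=2n-5$, and is meant to follow from combining that with Theorem \ref{thm:entropy index of circles} via the Abresch--Langer classification, which is precisely what you do. The observation that the constraint $1/2<m/n<\sqrt{2}/2$ forces $n\ge 3$ is the only nontrivial check, and you handle it correctly; invoking Theorem \ref{thm:F-unstable implies entropy unstable} is harmless but not even needed, since by Definition (ii)--(iii) a shrinker with positive entropy index is entropy unstable by definition.
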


\section{Generic CSF}\label{S:Generic CSF}

\paragraph{}
In this section, we prove Theorem \bref{thm:PCSF}{B}. The following lemma shows that, as a consequence of Corollary \ref{cor:stability of closed plane shrinkers}, we can perturb an unstable closed plane shrinker to satisfy the entropy conditions (\ref{eq:entropy condition 1}) for a piecewise CSF. For the piecewise flow in Theorem \bref{thm:PCSF}{B}, we will actually require the entropy inequality (\ref{eq:entropy condition 1}) to be a strict inequality so that we can exclude a shrinker from appearing as a singularity at later times of the flow. 

\begin{lemma}\label{lem:PCSF is possible}
Let $\Gamma\subset \R^2$ be a closed immersed curve such that the CSF $(\Gamma_t)_{t\in [0,T)}$ starting from $\Gamma$ has a type I singularity other than the embedded circle and let $\tilde{\Gamma}_{\infty}\subset \R^2$ be some blowup sequence limit at this singularity. Then there exists a $T_0\in[0,T)$ and a function $u\in C^{\infty}(\Gamma)$ such that the graph $\bar{\Gamma}=\Gamma_{T_0}+u\mathbf{n}$ satisfies 
\begin{align}
\lambda(\bar{\Gamma})&< \lambda(\tilde{\Gamma}_{\infty}). \label{eq:entropy condition}
\end{align}
\end{lemma}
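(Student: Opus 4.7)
The plan is to reduce the perturbation problem at $\Gamma_{T_0}$ to a perturbation problem at the blow-up limit $\tilde{\Gamma}_\infty$, exploit the instability of $\tilde{\Gamma}_\infty$, and then undo Huisken's parabolic rescaling. First, since $\tilde{\Gamma}_\infty$ is a closed plane shrinker (by Huisken) and is not the embedded circle by hypothesis, Corollary \ref{cor:stability of closed plane shrinkers} implies that $\tilde{\Gamma}_\infty$ has strictly positive entropy index. Theorem \ref{thm:F-unstable implies entropy unstable} then supplies a smooth variation $f \in C^\infty(\tilde{\Gamma}_\infty)$ and a fixed $\epsilon_1 > 0$ for which
\[
\lambda(\tilde{\Gamma}_\infty + \epsilon_1 f \mathbf{n}) \leq \lambda(\tilde{\Gamma}_\infty) - 2\delta
\]
for some gap $\delta > 0$. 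I would fix this $f$ and $\epsilon_1$ (after shrinking $\epsilon_1$ so that $\|\epsilon_1 f\|_{C^{2,\alpha}} \leq 1/2$, within the range needed by Corollary \ref{cor:continuity of entropy near a shrinker}).

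Next, by the $C^\infty$-convergence along the blow-up subsequence, for all large $i$ the rescaled curve $\tilde{\Gamma}_{\tau_i}$ is a normal graph $\tilde{\Gamma}_\infty + g_i \mathbf{n}$ with $g_i \to 0$ in $C^\infty$. I would feed this sequence $(g_i)$ and the fixed variation $f$ into Corollary \ref{cor:continuity of entropy near a shrinker} to obtain
\[
\lambda(\tilde{\Gamma}_{\tau_i} + \epsilon_1 f \mathbf{n}) = \lambda(\tilde{\Gamma}_\infty + \epsilon_1 f \mathbf{n} + g_i \mathbf{n}) \;\longrightarrow\; \lambda(\tilde{\Gamma}_\infty + \epsilon_1 f \mathbf{n})
\]
as $i \to \infty$. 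Fixing $i$ large enough so that the left side is within $\delta$ of the right limit then yields a concrete rescaled curve with entropy at most $\lambda(\tilde{\Gamma}_\infty) - \delta$.

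Finally, I would invert Huisken's rescaling centered at the singular point $x_0$. Let $T_0$ be the unrescaled time corresponding to $\tau_i$. The inverse rescaling is a pure translation plus dilation, so it maps $\tilde{\Gamma}_{\tau_i} + \epsilon_1 f \mathbf{n}$ to a smooth normal graph $\bar{\Gamma} = \Gamma_{T_0} + u \mathbf{n}$ over $\Gamma_{T_0}$, where $u$ is essentially $\sqrt{2(T-T_0)}\,\epsilon_1 f$ pulled back via the shared $S^1$ parameterization. Since entropy is invariant under translation and dilation, $\lambda(\bar{\Gamma}) = \lambda(\tilde{\Gamma}_{\tau_i} + \epsilon_1 f \mathbf{n}) < \lambda(\tilde{\Gamma}_\infty)$, which is exactly the desired inequality (\ref{eq:entropy condition}).

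The hard part is not the algebra of rescaling but the entropy estimate in the second step: the variation $f$ is designed on $\tilde{\Gamma}_\infty$, yet we must control the entropy of its transport onto the nearby (but distinct) curve $\tilde{\Gamma}_{\tau_i}$. A priori entropy can jump under $C^\infty$-small perturbations (the blow-up-at-a-sphere example in Section \ref{S:Properties of Entropy}), so the continuity statement of Corollary \ref{cor:continuity of entropy near a shrinker}, and ultimately Theorem \ref{thm:entropy achieved in compact set} behind it, is what makes the argument go through. Without the fact that entropy is continuous in a full $C^{2,\alpha}$-neighborhood of a closed plane shrinker, it would not be clear that the perturbation $\epsilon_1 f$ still decreases entropy once the base is replaced by its graph approximant $\tilde{\Gamma}_{\tau_i}$.
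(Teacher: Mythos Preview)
Your proposal is correct and follows essentially the same line as the paper's own proof: invoke Corollary \ref{cor:stability of closed plane shrinkers} and Theorem \ref{thm:F-unstable implies entropy unstable} to obtain an entropy-decreasing variation $f$ on $\tilde{\Gamma}_\infty$, write the rescaled curves $\tilde{\Gamma}_{\tau_i}$ as graphs $\tilde{\Gamma}_\infty + g_i\mathbf{n}$ with $g_i\to 0$, use Corollary \ref{cor:continuity of entropy near a shrinker} to transport the entropy drop to $\tilde{\Gamma}_\infty + (\epsilon f + g_i)\mathbf{n}$, and then undo Huisken's rescaling using the dilation and translation invariance of entropy. The only cosmetic difference is that you fix a single $\epsilon_1$ with a quantified gap $2\delta$, whereas the paper keeps a range $|\epsilon|<\epsilon_0$; the substance of the argument is identical.
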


\begin{proof}
If the CSF $(\Gamma_t)_{t\in [0,T)}$ has a type I singularity at time $T$, any limit of a rescaling sequence is a closed shrinker and thus is compact. Therefore, there is only one singular point $x_1\in \R^2$ at time $T$. Without loss of generality, we may assume that $0\in \R^2$ is a singular point. 
By the classification of 1-dimensional shrinkers \cite{AbLa}, $\tilde{\Gamma}_{\infty}$ must be a multiply-covered circle or a multiply-covered Abresch-Langer curve (including $1$ time covered). Since we assume that the shrinker $\tilde{\Gamma}_{\infty}$ is not an embedded circle, Corollary \ref{cor:stability of closed plane shrinkers} implies that $\tilde{\Gamma}_{\infty}$ is entropy unstable. 
By Theorem \ref{thm:entropy achieved in compact set}, there exists a variation $f\in C^{\infty}(\tilde{\Gamma}_{\infty})$ and an $\epsilon_0>0$ such that \[\lambda(\tilde{\Gamma}_{\infty}+\epsilon f\mathbf{n})<\lambda(\tilde{\Gamma}_{\infty}),\] for all $\epsilon \neq 0$ with $|\epsilon|<\epsilon_0$. By assumption, there exists a sequence $\tau_i\to \infty$ of rescaled times such that the rescaled curves $(\tilde{\Gamma}_{\tau_i})_{i\in \N}$ smoothly converge to $\tilde{\Gamma}_{\infty}$. Thus, there exists $N'\in \N$ and a sequence $(g_i)_{i\in \N}\subset C^{\infty}(\tilde{\Gamma}_{\infty})$ with $\|g_i\|_{C^{\infty}}\to 0$ such that, for $i>N'$, the rescaled curve $\tilde{\Gamma}_{\tau_i}$ can be written as a graph \[\tilde{\Gamma}_{\tau_i}=\tilde{\Gamma}_{\infty}+g_i \mathbf{n}.\] By the monotonicity of entropy and Corollary \ref{cor:continuity of entropy near a shrinker}, there exists $\epsilon_1\leq \epsilon_0$ and $N\geq N'$ such that, for all $\epsilon\neq 0$ with $|\epsilon|<\epsilon_1$ and all $i>N$, we have \[ \lambda(\tilde{\Gamma}_{\infty}+(\epsilon f+g_i)\mathbf{n})<\lambda(\tilde{\Gamma}_{\infty})\leq \lambda(\tilde{\Gamma}+g_i\mathbf{n}).\] 

Since entropy is invariant under dilations, when the curve $\tilde{\Gamma}_{\infty}+(\epsilon f+g_i)\mathbf{n}$ is rescaled back to the original spacetime, the entropy conditions can be satisfied when $|\epsilon|\neq 0$ is small enough, $i$ is large enough. The time $T_0$ then corresponds to the rescaled time $\tau_i$ and the function $u$ is a multiple of the function $\epsilon f$.
\end{proof}

\paragraph{}
By using the classification of singularities for the CSF of closed curves and Lemma \ref{lem:PCSF is possible}, we now prove Theorem \bref{thm:PCSF}{B}.

\begin{proof}[Proof of Theorem \bref{thm:PCSF}{B}]
We will construct a piecewise CSF with a finite number of discontinuities that eventually shrinks to a circular singularity, or has a type II singularity. We perform a smooth jump just before a (entropy unstable) singular time, where we replace a time slice of the flow by a graph over it, and the crucial point is to show that the entropy decreases below the entropy of the unstable singularity. We repeat this until we get a singular point which is either modelled by an embedded circle or type II.

Let $(\Gamma_t)_{t\in [0,T_1)}$ be the CSF starting at $\Gamma$. The CSF $(\Gamma_t)_{t\in [0,T_1)}$ has either a type I or a type II singularity at time $T_1$. We consider both cases:
\setlist[description]{font=\normalfont\itshape\space}
\begin{description}
\item[Case (I):]
If the CSF $(\Gamma_t)_{t\in [0,T_1)}$ has a type I singularity at time $T_1$, any limit of a rescaling sequence is either an entropy stable shrinker or an entropy unstable closed shrinker. In particular, any limit of a rescaling sequence is compact, and therefore, there is only one singular point $x_1\in \R^2$ at time $T_1$.
\begin{description}
\item[Case (I.a):] 
If there is some rescaling sequence limit which is an entropy unstable closed shrinker $\Gamma_{\infty}^1$, Lemma \ref{lem:PCSF is possible} gives the existence of a $t_2<T_1$ and a curve $\Gamma_{t_2}^2$ such that $\Gamma_{t_2}^2$ is a graph over $\Gamma_{t_2}^1:=\Gamma_{t_2}$ of a function $u_2\in C^{2,\alpha}(\Gamma_{t_2}^1)$ satisfying
\begin{align*}
\lambda(\Gamma_{t_2}^2)&< \lambda(\Gamma_{\infty}^1). 
\end{align*}
Consequently, there exists a CSF $(\Gamma_{t}^2)_{t\in [t_2,T_2)}$ starting at $\Gamma_{t_2}^2$ such that the concatenation of $(\Gamma_t^1)_{t\in [0,t_2]}$ with $(\Gamma_t^2)_{t\in [t_2,T_2)}$ is a piecewise CSF starting at $\Gamma$.
\item[Case (I.b):]
If all rescaling sequences converge to an entropy stable closed shrinker, it must be an embedded circle, so the theorem holds.
\end{description}
\item[Case (II):]
If the CSF $(\Gamma_t)_{t\in [0,T_1)}$ has a type II singularity at time $T_1$, the theorem holds.
\end{description}
If {\it Case (I.a)} applies, we return to cases {\it (I)} or {\it (II)} for $(\Gamma_t^2)_{t\in [t_2,T_2)}$, and repeat the process. Since the piecewise CSF preserves turning number and there are only finitely many closed planar shrinkers of a given turning number, by the fact that \[\lambda(\Gamma_{T_i}^i)<\lambda(\Gamma_{\infty}^{i-1})\] at each step and the monotonicity of entropy under piecewise CSF, {\it Case (I.a)} can happen only finitely many times. This proves the first part of the theorem.

It was shown in the proof of Theorem \ref{thm:main theorem type I singularity then entropy has lower bound} that any blowup sequence of a type I singularity of the CSF for closed curves preserves the turning number. It follows that a closed curve with turning number greater than 1 cannot have an embedded circle as the limit of a rescaling sequence. This proves the second part of Theorem \bref{thm:PCSF}{B}.
\end{proof}


{\it J.B.'s address: Department of Mathematics, Massachusetts Institute of Technology, Cambridge, MA 02139, USA. email: juliusbl@mit.edu}

{\it A.S.'s address: Department of Mathematics, Massachusetts Institute of Technology, Cambridge, MA 02139, USA.}

{\it A.S.'s current address: Department of Mathematics,
	University of Chicago,
	5734 S. University Avenue,
	Chicago, IL, 60637. email: aosun@uchicago.edu.}

\end{document}